\theoremstyle{remark}
\newtheorem{remark}{Remark}
\newtheorem{notations}[remark]{Notations}
\newtheorem{examples}[remark]{Examples}
\theoremstyle{plain}
\newtheorem{theorem}[remark]{Theorem}
\newtheorem{proposition}[remark]{Proposition}
\newtheorem{lemma}[remark]{Lemma}
\newtheorem{corollary}[remark]{Corollary}
\theoremstyle{definition}
\newtheorem{definition}[remark]{Definition}
\newcommand{\lineq}{\equiv_{\mathrm{lin}}}
\newcommand{\numeq}{\equiv_{\mathrm{num}}}
\begin{document}
\title[Surfaces with nontrivial surjective
endomorphisms]{Surfaces with nontrivial surjective endomorphisms of any
given degree}


\author{Antonio Rapagnetta}
\address{Via Della Ricerca Scientifica 1, 00133 - Roma (Italy)}
\email{rapagnet@axp.mat.uniroma2.it}

\author{Pietro Sabatino}
\address{Via Delle Mimose 9 Int. 14, 00172 - Roma (Italy)}
\email{pietrsabat@gmail.com}

\subjclass[2010]{Primary 14J25; Secondary 14J26, 14A10}
\keywords{nontrivial surjective endomorphism, projective bundles, \'etale
quotients}
\thanks{The authors would like to thank Massimiliano Mella who suggested this
problem to the second author many years ago.}

\begin{abstract}
  We present a complete classification of complex projective surfaces $X$ with
  nontrivial self-maps (i.e. surjective morphisms $f:X\rightarrow X$ which are
  not isomorphisms) of any given degree. The starting point of our
  classification
  are results contained in \cite{fujimoto:endo} and \cite{nakayama:ruled} that
  provide a list of surfaces that admit at
  least one nontrivial self-map. We then proceed
  by a
  case by case analysis that blends geometrical and arithmetical arguments in
  order to exclude that certain prime numbers appear as degrees of nontrivial
  self-maps of certain surfaces.
\end{abstract}

\maketitle
\section{Introduction and statement of the result}
We work over the complex field $\mathbb{C}$. All the varieties that we will
consider will be projective and smooth, in case of exceptions
we will explicitly state it.

\begin{definition}
  Let $X$ be a complex projective variety and let $f:X\rightarrow X$ be a
  surjective endomorphism of $X$, i.e. a morphism of $X$ onto itself.  $f$
  is said to be a \emph{nontrivial self-map} if it is not an isomorphism,
  or equivalently if
  the degree of $f$ is greater than or equal to $2$.
\end{definition}

In what follows we will provide a complete classification of surfaces that
admit a nontrivial self-map of any given degree, this is the content of the
following Theorem.

\begin{theorem}
  \label{theorem:principal}
  A surface admits nontrivial self-maps of any given degree
  if and only if it is one of the following:
  \begin{itemize}
    \item[(i)] $\mathbb{P}_1\times\mathbb{P}_1$
    \item[(ii)] \'Etale quotient of $\mathbb{P}^1\times C$,
      $C$ a smooth curve with $g(C)\ge 2$, by a
      cyclic group $G$ of automorphisms of $C$ acting freely on $C$ and
      faithfully on $\mathbb{P}^1$.
      If $G$ is not trivial its order is a prime
      $p$ and
      for every $\delta \in ( \mathbb{Z}/p \mathbb{Z})^*$ there exists
      $\varphi \in \mathrm{Aut}(C)$ such that $\varphi \circ g = g^{\pm \delta}
      \circ \varphi$ for every $g\in G$.
    \item[(iii)] $X$ is $\mathbb{P}^1$-bundle over an elliptic curve $E$,
      $\mathbb{P}(\mathcal{O}_E \oplus \mathcal{L})$, where $\mathcal{L}$ is a
      $k$-torsion line bundle on $E$ and either $k=1,2,3$ or
      \begin{itemize}
	\item[(iiia)] $k=4$, $E$ is the elliptic curve relative to the lattice
	  $<1,\frac{1}{2}+\frac{i \sqrt{7}}{2}>$, and $\mathcal{L}$  is in the
	  kernel of either $\frac{3}{2}+i \frac{\sqrt{7}}{2}$ or
	  $\frac{3}{2}-i
	  \frac{\sqrt{7}}{2}$.
	\item[(iiib)] $k=5$, $E$ is the elliptic curve relative to the lattice
	  $<1,i>$, and $\mathcal{L}$ is in the kernel of either $2+i$ or $2-i$.
	\item[(iiic)] $k=7$, $E$ is the elliptic curve relative to the lattice
	  $<1,\frac{1}{2}+\frac{i \sqrt{3}}{2}>$, and $\mathcal{L}$  is in the
	  kernel of either $\frac{5}{2}+\frac{i \sqrt{3}}{2}$ or
	  $\frac{5}{2}-\frac{i \sqrt{3}}{2}$.
      \end{itemize}
  \end{itemize}
\end{theorem}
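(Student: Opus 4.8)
The plan is to reduce the statement to a question about prime degrees and then run the classification case by case over the list of surfaces admitting at least one nontrivial self-map supplied by \cite{fujimoto:endo} and \cite{nakayama:ruled}. Since $\deg(f\circ g)=\deg f\cdot\deg g$, the set of degrees of self-maps of a fixed $X$ is a multiplicative submonoid of $\mathbb{Z}_{\ge 1}$; it is all of $\mathbb{Z}_{\ge 1}$ if and only if it contains every prime. Thus $X$ admits self-maps of every degree if and only if for every prime $p$ it admits a self-map of degree exactly $p$ (which, being prime, cannot be factored through a composition). The whole argument therefore becomes: for each surface in the list decide exactly which primes occur as degrees, and keep only those for which every prime occurs.

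First I would dispatch the cases settled by a numerical (N\'eron--Severi) argument. For $\mathbb{P}^2$ every morphism has degree $e^2$, so only squares occur and $\mathbb{P}^2$ is out. For a Hirzebruch surface $\mathbb{F}_n$ with $n\ge 1$ the unique negative section $C_0$ forces $f^*C_0=bC_0$ with no fibre component, and equating $(f^*C_0)^2=(\deg f)\,C_0^2$ gives $b^2=ab$, i.e.\ base degree $=$ fibre degree, so again only squares occur; only $\mathbb{F}_0=\mathbb{P}^1\times\mathbb{P}^1$ survives, where $(z,w)\mapsto(z^p,w)$ realises every prime, giving case (i). Abelian and hyperelliptic surfaces are excluded because the degree of a self-isogeny is a (reduced) norm from the endomorphism ring, and such degrees are either perfect squares or norms from an imaginary quadratic order, in every case missing infinitely many primes; any further items (such as elliptic surfaces of Kodaira dimension one) are removed by the same square/norm obstruction. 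If instead $X$ is ruled over a curve $C$ of genus $\ge 2$, then every surjective self-map of $C$ is an automorphism, so every self-map of $\mathbb{P}^1\times C$ is $(z\mapsto z^b)\times(\text{an automorphism of }C)$; every prime is realised on the $\mathbb{P}^1$ factor, and the descent to a free cyclic quotient of order $p$ is governed precisely by the existence, for each residue $\delta$, of $\varphi\in\mathrm{Aut}(C)$ with $\varphi g\varphi^{-1}=g^{\pm\delta}$ — exactly the condition in case (ii).

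The heart of the proof is case (iii), $X=\mathbb{P}(\mathcal{O}_E\oplus\mathcal{L})$ with $\mathcal{L}$ of order $k$. I would present $X$ as the free quotient $(\mathbb{P}^1\times E')/\langle\sigma\rangle$, where $E'\to E$ is the cyclic degree-$k$ isogeny trivialising $\mathcal{L}$, $t_0$ generates its kernel, and $\sigma(z,e)=(\zeta_k z,\,e+t_0)$. A self-map of $X$ lifts to a $\sigma$-equivariant $(h,\psi)$ on $\mathbb{P}^1\times E'$, and $F\circ\sigma=\sigma^c\circ F$ forces a fibre relation $h(\zeta_k z)=\zeta_k^{\,c}h(z)$ together with $\psi(t_0)=c\,t_0$. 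For a prime degree $p=(\deg\psi)(\deg h)$ there are only two options. In \emph{Option A} ($\deg\psi=1$, $\deg h=p$), $\psi$ is a translation composed with a unit $\epsilon\in\mathcal{O}^\times$, so $c$ must lie in the image $\overline{\mu}$ of $\mathcal{O}^\times$ in $(\mathbb{Z}/k)^\times$; a fixed-point count on the fibre (the local degrees at $0,\infty$ are $\equiv\pm c\bmod k$) shows a suitable $h$ exists exactly when $p\bmod k\in\overline{\mu}$, while the configuration in which both fibre-fixed points lie over one target point realises $p\equiv 0\bmod k$, i.e.\ $p=k$ when $k$ is prime (e.g.\ $h(z)=z/(z^k+1)$). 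In \emph{Option B} ($\deg\psi=p$, $\deg h=1$), $E$ must have complex multiplication, $p=N(\psi)$, and — crucially — since a fibrewise M\"obius map can only produce the twists $c\equiv\pm1$ (a nonconstant multiplier $\lambda(e)$ would have zeros, destroying the morphism), $\psi$ must act on $\langle t_0\rangle$ with eigenvalue $\pm1$.

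The remaining work is the arithmetic, and this is where I expect the main difficulty. Because each class of $(\mathbb{Z}/k)^\times\setminus\overline{\mu}$ contains infinitely many primes (Dirichlet) that the density-$\tfrac12$ set of norms in Option B cannot supply, realising all primes coprime to $k$ forces $\overline{\mu}=(\mathbb{Z}/k)^\times$; as $|\mathcal{O}^\times|\in\{2,4,6\}$ this makes $(\mathbb{Z}/k)^\times$ cyclic of order $\le 6$, leaving finitely many $k$, and it also requires a ring homomorphism $\mathcal{O}\to\mathbb{Z}/k$ (a cyclic $\mathcal{O}$-stable torsion line, i.e.\ the kernel conditions on $\mathcal{L}$). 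For each surviving $k$ one then handles the primes $p\mid k$: for $k\le 3$ none remain ($p=k$ is covered by Option A and $\overline{\mu}$ is automatic), so every elliptic curve works; for $k=5,7$ one needs $\overline{\mu}=(\mathbb{Z}/k)^\times$, forcing units of order $4$ (so $\mathbb{Q}(i)$) resp.\ order $6$ (so $\mathbb{Q}(\sqrt{-3})$) together with the eigenline, which pins down the stated curves and the kernels of $2\pm i$ resp.\ $\tfrac{5\pm\sqrt{-3}}2$; for $k=4$ the only obstruction is $p=2$, realised by a norm-$2$ endomorphism acting as $-1$ on $\langle t_0\rangle$, singling out $\mathbb{Q}(\sqrt{-7})$ with $\mathcal{L}\in\ker\tfrac{3\pm\sqrt{-7}}2$ (so that $\pi=\tfrac{1+\sqrt{-7}}2$ has eigenvalue $-1$). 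The delicate point throughout is reconciling three constraints simultaneously — that $p$ be a norm in $\mathcal{O}$, that the relevant endomorphism act as $\pm1$ on the cyclic $k$-torsion line, and that $\overline{\mu}$ fill $(\mathbb{Z}/k)^\times$ — and it is exactly this triple compatibility that eliminates the borderline moduli $k=6,9,10,14,18,\dots$ (for instance $x^2+x+1$ has no root modulo $9$ or $14$, and for $k=10$ a norm-$2$ element acts as $4\not\equiv\pm1$), leaving precisely the list (i)--(iii).
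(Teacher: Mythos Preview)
Your global strategy---reduce to prime degrees and run through the Fujimoto--Nakayama list---is exactly the paper's, and your equivariant-lift picture for the torsion case is equivalent to the paper's normal-bundle computations. But several structural reductions that the paper carries out are simply assumed in your sketch, and one density step is not valid as written.

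\medskip
\textbf{Missing reductions over an elliptic base.} You begin the elliptic case with ``$X=\mathbb{P}(\mathcal{O}_E\oplus\mathcal{L})$ with $\mathcal{L}$ of order $k$''. But a $\mathbb{P}^1$-bundle over $E$ can also be $\mathbb{P}(\mathcal{E})$ with $\mathcal{E}$ the indecomposable degree-$0$ or degree-$1$ Atiyah bundle, or $\mathcal{O}\oplus\mathcal{L}$ with $\deg\mathcal{L}>0$, or with $\deg\mathcal{L}=0$ and $\mathcal{L}$ non-torsion. Each of these must be excluded by a separate argument: for the indecomposable degree-$0$ bundle the unique zero-square section forces any base-automorphism self-map to have ramification supported on a single section, which is impossible on a fibre; for the degree-$1$ bundle a parity computation in $\mathrm{NS}$ rules out degree $2$; for $\deg\mathcal{L}>0$ the unique negative section forces square degree; for non-torsion $\mathcal{L}$ there is no base-automorphism self-map at all. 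None of this is covered by your Option~A/Option~B dichotomy.

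\medskip
\textbf{Missing reduction for $g(B)\ge 2$.} You pass directly to ``$X$ is an \'etale quotient of $\mathbb{P}^1\times C$ by a cyclic group of prime order''. Nakayama's result only says $X$ trivialises after \emph{some} \'etale base change; to obtain $\mathcal{E}\simeq\mathcal{O}\oplus\mathcal{L}$ with $\mathcal{L}$ torsion of \emph{prime} order one has to analyse a degree-$2$ self-map, track the ramification divisor (section vs.\ irreducible double cover), and rule out the case where $R_f$ is an irreducible double cover with torsion normal bundle of order $>2$ by showing such an $X$ admits no self-map of degree equal to that order. This is the content of the paper's Propositions~15--16 and Theorem~13, and it is not automatic.

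\medskip
\textbf{The density step.} Your claim that ``the density-$\tfrac12$ set of norms in Option~B cannot supply'' every prime in a residue class outside $\overline{\mu}$ is not a valid inference: a set of density $1/2$ can perfectly well contain an entire arithmetic progression of density $1/\varphi(k)$. What one must show (and what the paper does for $\varphi(k)=4$) is that the \emph{intersection} of the Option~A primes with the splitting primes of $\mathrm{End}_{\mathbb{Q}}(E)$ has positive density, so that their union has density strictly less than $1$; this uses quadratic reciprocity to exhibit a common congruence class. Your argument needs this extra step before the finiteness of admissible $k$ follows.

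\medskip
Minor points: toric surfaces include equivariant blow-ups of $\mathbb{P}^2$ and $\mathbb{F}_n$, so more than one negative curve can occur and a slightly different bound is needed; and a self-map of $\mathbb{P}^1\times C$ is $h\times\varphi$ with $h$ an arbitrary degree-$b$ rational map, not just $z\mapsto z^b$.
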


Looking at the above list, one immediately realizes that a priori it is not
clear whether or not there exist examples of surfaces that satisfies
Theorem \ref{theorem:principal} (ii).
But, it turns out that such examples exist and we describe
some of them in Example
\ref{examples:examplescase2}. Our description is based on the
classical result of Hurwitz that states that every finite group can be realized
as an automorphism group of some compact Riemann surface (see
\cite{breuer:automorphisms_rs} for instance).

The starting point of our analysis is the following result that provides a list
of surfaces that do admit at least one nontrivial self-map.

\begin{theorem} \label{theorem:classification}
  Let $X$ be a complex projective surface, $X$ admits a nontrivial self-map
  if and only if one of the following conditions is satisfied
  \begin{itemize}
    \item[(i)] $X$ is an abelian surface;
    \item[(ii)] $X$ is an hyperelliptic surface, $X$ is an entry
      in the list of Bagnera-de~Franchis (see for example
      \cite[pp.~83--84]{beauville:surfaces});
    \item[(iii)] $X$ is a minimal surface $k(X)=1$ and $\chi(\mathcal{O}_X)=0$;
    \item[(iv)] $X$ is a toric surface;
    \item[(v)] $X$ is a $\mathbb{P}^1$-bundle over an elliptic curve;
    \item[(vi)] $X$ is a $\mathbb{P}^1$-bundle over a nonsingular projective
      curve $B$ with $g(B)>1$ such that $X\times_B B^\prime$ is trivial
      after an \'etale base change $B^\prime \rightarrow B$.
  \end{itemize}
\end{theorem}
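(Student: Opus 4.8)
The plan is to run through the Enriques--Kodaira classification according to the Kodaira dimension $\kappa(X)$, feeding two elementary inputs into the finer endomorphism results of Fujimoto \cite{fujimoto:endo} and Nakayama \cite{nakayama:ruled}. The first input is that a surjective endomorphism $f\colon X\to X$ of a smooth projective surface is automatically finite, so that the ramification formula $K_X=f^{*}K_X+R$ holds with an effective divisor $R\ge 0$, and that $f^{*}K_X$ is nef whenever $K_X$ is. The second is that an \'etale $f$ of degree $d\ge 2$ forces $e(X)=d\,e(X)$, hence $e(X)=0$ and, by Noether's formula on a minimal model with $K_X^{2}=0$, also $\chi(\mathcal{O}_X)=0$. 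I would first record that for $\kappa(X)\ge 0$ a nontrivial self-map forces $X$ to coincide with its minimal model, so that throughout the cases $\kappa(X)\ge 0$ one may assume $K_X$ nef.

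For $\kappa(X)=2$ the minimal model has $K_X$ nef and big with $K_X^{2}>0$; since $f^{*}K_X=K_X-R$ is again nef, expanding $(f^{*}K_X)^{2}=d\,K_X^{2}$ together with the inequalities $f^{*}K_X\cdot R\ge 0$ and $K_X\cdot R\ge 0$ yields the incompatible relations $R^{2}>2\,K_X\cdot R$ and $K_X\cdot R\ge R^{2}$, ruling out $d\ge 2$; thus no surface of general type admits a nontrivial self-map. For $\kappa(X)=0$ the class $K_X$ is torsion, so $f^{*}K_X$ is torsion and $R=K_X-f^{*}K_X$ is an effective, numerically trivial divisor, whence $R=0$ and $f$ is \'etale. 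Then $e(X)=0$ excludes K3 and Enriques surfaces ($e=24$ and $e=12$) and leaves precisely the abelian surfaces (i) and the bielliptic surfaces of the Bagnera--de~Franchis list (ii).

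For $\kappa(X)=1$ the pluricanonical elliptic fibration $\phi\colon X\to B$ is intrinsic and is therefore preserved by $f$; working on the minimal model, where $K_X^{2}=0$, the same nef computation now gives $K_X\cdot R=0$ and $R^{2}=0$, so $R$ is vertical for $\phi$ and $f$ is \'etale over the generic fibre. A Hurwitz--Zeuthen count along $\phi$, as carried out in \cite{fujimoto:endo}, then shows that a nontrivial $f$ can exist only when $e(X)=0$, equivalently $\chi(\mathcal{O}_X)=0$; this is case (iii).

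The remaining case $\kappa(X)=-\infty$, where the Euler-number bookkeeping no longer closes the argument, is where I expect the genuine difficulty to lie, and here I would lean most heavily on \cite{nakayama:ruled}. One separates according to the genus of the base $B$ of the ruling $X\to B$, which is intrinsic once $g(B)\ge 1$ and hence preserved by $f$. If $g(B)\ge 2$ the curve $B$ admits no endomorphism of degree $\ge 2$, so $f$ induces an automorphism of $B$ and all of its degree is fibrewise; analysing the induced fibrewise self-maps of degree $\ge 2$ of the $\mathbb{P}^{1}$-bundle shows, after a suitable \'etale base change $B'\to B$, that the bundle must become trivial, which is exactly condition (vi). If $g(B)=1$ the base is elliptic and every $\mathbb{P}^{1}$-bundle over it carries nontrivial self-maps, giving (v). The rational case $B=\mathbb{P}^{1}$ (together with $\mathbb{P}^{2}$) is the subtlest: one must prove that a rational surface admits a nontrivial self-map if and only if it is toric, the converse being supplied by the multiplication maps of the open torus. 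Finally, sufficiency for (i)--(vi) is witnessed by explicit self-maps: multiplication by $n$ on abelian and bielliptic surfaces, the maps $(x_{0}:x_{1}:x_{2})\mapsto(x_{0}^{n}:x_{1}^{n}:x_{2}^{n})$ and their toric analogues, fibre-power maps composed with isogenies of the base in (v), and the descent of fibre-power maps from $\mathbb{P}^{1}\times B'$ in (vi). The two principal obstacles are thus the extraction of the \'etale-triviality condition in (vi) from the fibrewise analysis over a curve of genus $\ge 2$, and the characterisation of the toric surfaces among all rational surfaces; both rest on the structural results of \cite{fujimoto:endo,nakayama:ruled} rather than on the elementary numerics that settle the other Kodaira dimensions.
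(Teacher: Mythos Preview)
The paper's own proof of this statement is a bare citation: it refers to \cite[Theorem~3.2]{fujimoto:endo} for $\kappa(X)\ge 0$ and to \cite[Theorem~3]{nakayama:ruled} for $\kappa(X)=-\infty$, with no further argument. Your proposal is therefore not so much an alternative proof as an expository unpacking of what lies behind those citations, and as such it is essentially correct and follows the same overall architecture (Enriques--Kodaira case split, ramification/Euler-characteristic bookkeeping for $\kappa\ge 0$, structural results from Nakayama for the ruled case). Your numerical arguments for $\kappa=2$ and $\kappa=0$ are sound, and you are right to flag the rational case and the \'etale-triviality condition in (vi) as the genuinely nontrivial inputs coming from \cite{nakayama:ruled}.

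Two small points of looseness worth tightening if you were to write this out in full. First, the claim that for $\kappa(X)\ge 0$ a nontrivial self-map forces $X$ to be minimal deserves a line of justification (e.g.\ via the finiteness of the set of $(-1)$-curves and the action of $f$ on it, as in \cite{fujimoto:endo}); you assert it but do not indicate the mechanism. Second, your sufficiency paragraph is a bit casual: ``multiplication by $n$'' does not literally act on a bielliptic surface, and you do not say why every minimal elliptic surface with $\kappa=1$ and $\chi(\mathcal{O}_X)=0$, or every $\mathbb{P}^1$-bundle over an elliptic curve, actually carries a nontrivial self-map. These are all in the cited references, so pointing there (as the paper does) is enough, but your sketch reads as if these were self-evident.
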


\begin{proof}
  See \cite[Theorem~3.2]{fujimoto:endo} for the case $k(X)\ge 0$ and
  \cite[Theorem~3]{nakayama:ruled} for the case $k(X)=-\infty$.
\end{proof}

Since the degree of self-maps is multiplicative with respect to the
composition, one realizes immediately that a surface admits nontrivial self-maps
of any given degree if and only if it admits nontrivial self-maps of any given
degree.
Hence trough the rest of this paper we will restrict our analysis to nontrivial
self-maps of prime degree without any further comment.
Theorem \ref{theorem:principal} will follow from Theorem
\ref{theorem:classification} after a case by case analysis involving both
geometric and arithmetic arguments.

\begin{notations}
  We will denote by $\numeq$, $\lineq$, respectively linear
  equivalence and numerical equivalence of divisors.
  For a locally free sheaf
  $\mathcal{E}$ on a smooth projective variety we put
  $\mathbb{P}(\mathcal{E}):= \mathbf{Proj}\big(
  \mathrm{Sym}(\mathcal{E}^\vee)\big)$, note that our notation
  coincide
  with the $\mathbb{P}(\mathcal{E}^\vee)$ of Hartshorne's book.
\end{notations}

\section{Abelian Surfaces and the case $k(X)\ge 0$}
First of all we are going to analyse case (i) of Theorem
\ref{theorem:classification}, namely abelian surfaces. This case will be a
direct consequence of the Lemma below.

\begin{lemma}
  Let T be a complex torus. There exist an infinite number of primes that do not
  appear as degree of a nontrivial self-map of T.
  \label{lemma:degreetourus}
\end{lemma}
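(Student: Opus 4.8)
The plan is to reduce everything to the endomorphism ring of $T$ and to the behaviour of a single norm-type function under complex conjugation. Write $T=V/\Lambda$ with $\dim_{\mathbb{C}}V=g$. Any self-map of $T$ is the composition of a translation, which has degree $1$, with a group homomorphism, so I may assume that $f=\phi$ is an isogeny $\phi\in\mathrm{End}(T)$. Denoting by $\rho_a(\phi)\in M_g(\mathbb{C})$ and $\rho_r(\phi)\in M_{2g}(\mathbb{Z})$ the analytic and rational representations, one has $\deg\phi=\det\rho_r(\phi)=\det\rho_a(\phi)\cdot\overline{\det\rho_a(\phi)}$. I therefore set $\nu(\phi):=\det\rho_a(\phi)$; it is multiplicative, it satisfies $\deg\phi=\nu(\phi)\overline{\nu(\phi)}=|\nu(\phi)|^2$, and, being a product of eigenvalues of the integral matrix $\rho_r(\phi)$, it is an algebraic integer.

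First I would show that all the values $\nu(\phi)$ lie in one fixed number field. Fixing a $\mathbb{Z}$-basis $\phi_1,\dots,\phi_r$ of $\mathrm{End}(T)$, the function $(n_1,\dots,n_r)\mapsto\nu(\sum_i n_i\phi_i)=\det(\sum_i n_i\rho_a(\phi_i))$ is the restriction to $\mathbb{Z}^r$ of a homogeneous polynomial of degree $g$ whose values at integral points are algebraic integers; by Lagrange interpolation its coefficients are $\mathbb{Q}$-linear combinations of finitely many such values, hence algebraic, so all the $\nu(\phi)$ lie in the number field $K_0$ that they generate. Replacing $K_0$ by the Galois closure over $\mathbb{Q}$ of $K_0\cdot\overline{K_0}$, I obtain a number field $K$, Galois over $\mathbb{Q}$ and stable under complex conjugation, with $\nu(\mathrm{End}(T))\subseteq\mathcal{O}_K$ and $\overline{\nu(\phi)}=c(\nu(\phi))$, where $c\in\mathrm{Gal}(K/\mathbb{Q})$ is the restriction of complex conjugation.

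I would then split into two cases according to whether $K$ is totally real. If $K\subseteq\mathbb{R}$ then $c=\mathrm{id}$ and $\deg\phi=\nu(\phi)^2$; hence $\deg\phi=p$ would force $\sqrt{p}\in K$, which happens for at most finitely many primes $p$, so that all but finitely many primes are excluded. If $K\not\subseteq\mathbb{R}$ then $c\neq\mathrm{id}$, and by the Chebotarev density theorem there are infinitely many primes $p$, unramified in $K$, admitting a prime $\mathfrak{Q}\mid p$ of $\mathcal{O}_K$ whose Frobenius equals $c$; for such a $p$ the decomposition group of $\mathfrak{Q}$ is $\langle c\rangle$, so $c$ fixes $\mathfrak{Q}$ and $v_{\mathfrak{Q}}(p)=1$. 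If $p=\deg\phi=\nu(\phi)\,c(\nu(\phi))$ for some $\phi$, then, since $c\mathfrak{Q}=\mathfrak{Q}$, one gets $v_{\mathfrak{Q}}(p)=v_{\mathfrak{Q}}(\nu(\phi))+v_{\mathfrak{Q}}(c(\nu(\phi)))=2\,v_{\mathfrak{Q}}(\nu(\phi))$, which is even, contradicting $v_{\mathfrak{Q}}(p)=1$; hence none of these infinitely many primes is a degree.

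The main obstacle is the second step: controlling $\nu$ globally, that is, proving that its values are confined to a single number field rather than to a field growing with $\phi$, and arranging $K$ so that complex conjugation is realised by a genuine element of $\mathrm{Gal}(K/\mathbb{Q})$ with the required Frobenius. Once this is in place, the parity-of-valuation argument together with Chebotarev finishes the proof. The totally real case is easy by comparison, being essentially the observation that a fixed number field contains $\sqrt{p}$ for only finitely many primes $p$.
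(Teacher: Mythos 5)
Your proof is correct and follows essentially the same route as the paper: reduce to isogenies, write $\deg\phi=\det\rho_a(\phi)\cdot\overline{\det\rho_a(\phi)}$, confine the values $\det\rho_a(\phi)$ to a fixed Galois number field stable under conjugation, and apply Chebotarev to produce infinitely many primes for which $p=\alpha\overline{\alpha}$ is impossible. The only differences are in the details: you justify the ``fixed number field'' step by polynomial interpolation rather than via the entries of a period matrix, and you make explicit the valuation-parity argument (and the easy totally real case) that the paper leaves implicit.
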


\begin{proof}
  Let $V$ a complex vector space of dimension $g$ and $\Lambda$ a lattice in
  $V$. Put $T=V/\Lambda$.
  Every nontrivial self-map $f:T\rightarrow T$ is the composition of a
  translation and a group endomorphism of $T$, then we may suppose
  without loss of generality that $f$ is a group endomorphism of $T$. Denote by
  $\mathrm{End}(T)$ the set of group endomorphisms of $T$.
  Denote by $\rho_a$ and $\rho_r$ extensions of the analytical and rational
  representation of $ \mathrm{End}(T)$ to $\mathrm{End}_{\mathbb{Q}}(T)=
  \mathrm{End}(T)\otimes \mathbb{Q}$ (see \cite[p.~10]{birkenhake_lange:cav}).
  The extended rational representation
  \begin{equation*}
    \rho_r\otimes 1:\mathrm{End}_{\mathbb{Q}}(T)\otimes \mathbb{C}\rightarrow
    \mathrm{End}_{ \mathbb{C}}(\Lambda \otimes \mathbb{C})\simeq
    \mathrm{End}_{ \mathbb{C}}(V\times V)
  \end{equation*}
  is equivalent to the direct sum of the analytic representation and its
  conjugate \cite[Proposition 1.2.3]{birkenhake_lange:cav}
  \begin{equation}
    \rho_r\otimes 1\simeq \rho_a\oplus \overline{\rho_a}
    \label{equation:torus1}
  \end{equation}
  Observe now that $\deg(f)=\det \rho_r(f)=\det \rho_a(f)
  \overline{\det\rho_a(f)}$. Since $\rho_r(f)$ has integer
  entries its eigenvalues are all algebraic integers, it follows by
  \eqref{equation:torus1} that $\det \rho_a(f)$ is also algebraic integer.
  Moreover $\det \rho_a(f)$ for all $f\in \mathrm{End}(T)$, are
  all contained in the same number field that depends only on $T$. Indeed they
  generate an extension, say $K$,
  contained in a finitely generated extension of $\mathbb{Q}$, namely the
  extension generated by the entries of a period matrix for $T$. It is a well
  known fact that $K$ is finitely generated too
  \cite[p.229, Remark]{lang:algebra}.
  We may suppose without
  loss of generality that $K$ is Galois over $\mathbb{Q}$.
  Summing up we have that if a prime $p$ appears as the degree of an
  endomorphism of $T$ then
  \begin{equation}
    p=\alpha \overline \alpha, \ \alpha\in K
    \label{equation:torus2}
  \end{equation}
  $K$ a Galois number field and $\alpha$ an algebraic integer. 
  Moreover in what follows we may and will restrict
  our attention to primes that do not ramify in the extension $K$, since the
  number of these primes is finite.
  If a prime satisfies \eqref{equation:torus2} then no prime ideal in $K$ that
  divides $(p)$, the ideal generated by $p$ in the ring of integers of $K$,
  admits complex conjugation as its Frobenius. It follows by \v Cebotarev
  density
  theorem \cite[Theorem 13.4, p.~545]{neukirch:ant} that the complementary set
  of the set of primes that satisfy \eqref{equation:torus2} has analytic
  density strictly greater than zero.
\end{proof}

\begin{corollary}
  Let $A$ be an abelian variety, then there are an infinite number of primes
  that do not appear as degree of a nontrivial self-map of $A$.
  \label{corollary:abelianvarieties}
\end{corollary}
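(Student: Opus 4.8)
The plan is to reduce this statement immediately to Lemma~\ref{lemma:degreetourus}. Every complex abelian variety $A$ is, by definition, a projective complex torus, so as a complex analytic group it is of the form $V/\Lambda$ for a complex vector space $V$ and a lattice $\Lambda$. First I would observe that the surjective endomorphisms of $A$ as an algebraic variety are exactly the surjective holomorphic self-maps of the underlying complex torus: any such map is the composition of a translation with a group homomorphism, and for a finite surjective morphism between smooth projective varieties of the same dimension the algebraic degree coincides with the topological degree, which is in turn computed by $\det\rho_r(f)$ exactly as in the proof of the Lemma. Consequently the set of primes arising as degrees of nontrivial self-maps of $A$ is contained in the set of primes arising as degrees of nontrivial self-maps of the torus $V/\Lambda$.

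With this identification in hand, I would simply apply Lemma~\ref{lemma:degreetourus} to the complex torus $T=V/\Lambda$ underlying $A$. The Lemma produces infinitely many primes that do not occur as the degree of any nontrivial group endomorphism, and hence of any nontrivial self-map, of $T$; by the previous paragraph these same primes are not realised as degrees of nontrivial self-maps of $A$ either. I do not expect any genuine obstacle here, since the argument of the Lemma never used projectivity of the torus and relied only on the rational and analytic representations together with the \v Cebotarev density theorem. The one point worth recording explicitly is the coincidence of the two notions of degree, after which the corollary follows verbatim from the Lemma.
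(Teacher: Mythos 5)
Your proposal is correct and is exactly the (implicit) argument of the paper: the corollary is stated without a separate proof precisely because an abelian variety is a projective complex torus, so Lemma~\ref{lemma:degreetourus} applies directly once one notes that the algebraic degree of a surjective self-map agrees with the topological degree $\det\rho_r(f)$. Your explicit remark on the coincidence of the two notions of degree is a reasonable point to record, but it introduces nothing beyond what the paper takes for granted.
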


\begin{remark}
  Riemann-Hurwitz formula implies that if a curve $C$ possesses
  a nontrivial self-map then $g(C)\le 1$. Moreover by
  Corollary \ref{corollary:abelianvarieties} it follows that $\mathbb{P}^1$ is
  the only curve with nontrivial self-maps of any given degree.
  \label{remark:curves}
\end{remark}

Now we come to (ii) and (iii) of Theorem \ref{theorem:classification}. The
proof of the following Proposition starts considering surfaces of Kodaira
dimension one.
After a preliminary argument we are left with surfaces such that
$p_g=0$ and $q=1$ that we are able to treat by an argument that holds 
for surfaces in Theorem \ref{theorem:classification} (ii) too.

\begin{proposition}
  Let $X$ be either a minimal surface with $k(X)=1$ and $\chi(\mathcal{O}_X)=0$
  or an hyperelliptic surface. $X$ fails to admit a nontrivial
  self-map of degree a given prime for an infinite number of primes.
\end{proposition}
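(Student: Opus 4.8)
The plan is to exploit, in every case, a canonical fibration over a curve and to control separately the degree of the induced map on the base and on a general fibre. For a minimal surface with $k(X)=1$ the Iitaka fibration $\pi\colon X\to B$ is an elliptic fibration canonically attached to $X$, so any self-map $f$ descends to a self-map $\bar f\colon B\to B$ with $\pi\circ f=\bar f\circ\pi$; consequently $\deg f=\deg\bar f\cdot d$, where $d$ is the degree of $f$ on a general fibre. Since $\chi(\mathcal{O}_X)=0$ forces $e(X)=12\chi(\mathcal{O}_X)=0$, the fibration has no singular fibres, all fibres are smooth elliptic curves, and (the $j$-invariant being a morphism from the complete base $B$ to the affine $j$-line, hence constant) the fibration is isotrivial with a fixed general fibre $F$. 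For a hyperelliptic surface I would use instead the Albanese map $a\colon X\to E_0$, which by $q=1$ lands on an elliptic curve, together with the analogous factorisation $\deg f=\deg\bar f\cdot d$.

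The preliminary reduction disposes of the case $q\ge 2$ (equivalently, since $\chi(\mathcal{O}_X)=0$ gives $p_g=q-1$, of $p_g\ge 1$). Here the Albanese image is a curve $Z$: it cannot be a surface, as $X$ would then dominate an abelian surface and have $k(X)=2$, and $Z$ has genus $q\ge 2$. By Remark~\ref{remark:curves} the induced map $\bar f\colon Z\to Z$ is an automorphism, so the whole degree of $f$ is carried by the fibres of the Stein factorisation $X\to Z$. These fibres cannot be rational (else $k(X)=-\infty$); if they have genus $\ge 2$ then $f$ restricts to an isomorphism on a general fibre and no nontrivial self-map exists at all, while if they are elliptic the fibration is again isotrivial with fixed fibre $F$ and $\deg f$ equals the degree of a self-map of $F$. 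In that last situation Lemma~\ref{lemma:degreetourus}, applied to the elliptic curve $F$, already excludes infinitely many primes. We are thus reduced to $p_g=0$, $q=1$, which is also exactly the numerical type of a hyperelliptic surface.

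For the core case I would run the unified argument through the Albanese map $a\colon X\to E_0$ onto the elliptic curve $E_0=\mathrm{Alb}(X)$. Functoriality of the Albanese gives $\bar f=(\text{isogeny})\circ(\text{translation})$ on $E_0$ with $a\circ f=\bar f\circ a$, whence $\deg f=\deg\bar f\cdot d$ with $d$ the degree on a general Albanese fibre $C$. If $g(C)\ge 2$, which is the reduced $k(X)=1$ case over a rational Iitaka base (where the two fibrations of $X$ exchange roles), then $f$ is an isomorphism on fibres, $d=1$, and $\deg f=\deg\bar f$ is the degree of an isogeny of $E_0$; Lemma~\ref{lemma:degreetourus} for $E_0$ finishes this subcase. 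If $g(C)=1$ (the hyperelliptic surfaces, and the isotrivial $k(X)=1$ surfaces whose elliptic Iitaka base coincides with $E_0$) then $X$ carries two elliptic fibrations, both base and fibre are elliptic, and for $\deg f=p$ prime the factorisation forces either $\deg\bar f=p$ with $d=1$, so that $p$ is the degree of an isogeny of $E_0$, or $\deg\bar f=1$ with $d=p$, so that $p$ is the degree of a self-map of the fixed fibre $F$.

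The main obstacle is the final arithmetic, where two independent endomorphism rings must be excluded at once. Reprising the proof of Lemma~\ref{lemma:degreetourus}, a prime that is the degree of an isogeny of an elliptic curve must be a norm $\alpha\overline{\alpha}$ from its endomorphism algebra: a non-CM curve contributes only perfect squares, hence no primes at all, while a CM curve contributes exactly the primes that split in its imaginary quadratic field. Writing $K_1,K_2$ for the CM fields of $E_0$ and of $F$ (discarding whichever is non-CM), I would pass to the Galois closure $L\supseteq K_1K_2$ and invoke the \v Cebotarev density theorem to produce the positive-density set of primes whose Frobenius restricts to complex conjugation on both quadratic factors simultaneously; such primes split in neither $K_1$ nor $K_2$, so they arise as the degree of a self-map of $X$ by neither route. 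This set is infinite, which yields the Proposition. The delicate point is precisely that the base and fibre contributions must be excluded together: a naive union of the two appearing sets (each of density up to $\tfrac12$) could a priori cover all primes, and it is the joint \v Cebotarev condition in the compositum $L$ that guarantees the simultaneous exclusion.
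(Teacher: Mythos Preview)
Your reduction in the $q\ge 2$ case has a genuine gap: the assertion that the Albanese image must be a curve is false for surfaces with $k(X)=1$. The reasoning you give---that dominating an abelian surface would force $k(X)=2$---is incorrect; it only yields $k(X)\ge 0$. Concretely, write $X\simeq(F\times\widetilde B)/G$ with $F$ elliptic, $g(\widetilde B)\ge 2$, and $G$ acting freely on the product (this is the description the paper uses). Then $q=g(F/G)+g(B)$ with $B=\widetilde B/G$, and whenever $F/G$ is elliptic and $g(B)\ge 1$ the Albanese map $X\to (F/G)\times J(B)$ has two-dimensional image. For instance, let $G=\mathbb{Z}/2\mathbb{Z}$ act by a nontrivial translation on $F$ and by a two-fixed-point involution on a genus-two curve $\widetilde B$: here $q=2$, the Albanese image is the abelian surface $(F/G)\times B$, and $X$ does carry nontrivial self-maps, so your argument simply misses this case. (A smaller inaccuracy: $e(X)=0$ does not exclude singular fibres---multiple fibres $mE_0$ with $E_0$ smooth elliptic contribute nothing to $e(X)$ and are typically present; this does not, however, affect your isotriviality conclusion.)

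The paper avoids this by organising the case analysis through the genus of the Iitaka base $B$ rather than through $q$. For $g(B)>1$ the induced map $f_B$ is an automorphism and the whole degree sits on the elliptic fibre $F$; for $g(B)=0$ one finds $q=1$, the Albanese fibre is $\widetilde B$ of genus $\ge 2$, and the degree sits on the elliptic Albanese base $F/G$. The case your structure mishandles is $g(B)=1$, where both base and fibre are elliptic: here the paper observes that any prime occurring as the degree of a self-map of $X$ is automatically the degree of the product endomorphism $f_B\times f_{|F}$ of the abelian surface $B\times F$, so Corollary~\ref{corollary:abelianvarieties} applies directly. This is exactly the arithmetic content of your \v Cebotarev-on-the-compositum argument, but repackaged through the already-proved Lemma~\ref{lemma:degreetourus} for complex tori, which spares the separate density computation.
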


\begin{proof}
  First of all suppose that $X$ is minimal, $k(X)=1$ and
  $\chi(\mathcal{O}_B)=0$. Note that $K_X^2=0$ and then the topological
  Euler-Poincar\'e characteristic $e(X)$ is zero too.
  Moreover $X$ admits an elliptic
  fibration $\pi :X\rightarrow B$, $B$ a smooth curve, and since $e(X)=0$
  exceptional fibres are multiples of a smooth elliptic curve \cite[Proposition
  (11.4) and Remark (11.5), p.~118]{bpv:ccs}.

  Arguing as in
  \cite[Chapter VI]{beauville:endomorphisms} it follows that $X\simeq (F\times
  \widetilde B)/G$ where $F$ and $\widetilde B$ are smooth curves $g(F)=1$,
  $g(\widetilde B)\ge 2$
  and $G$ is a group of automorphisms of $F$ and $\widetilde B$
  such that
  $G$ acts freely on $F\times \widetilde B$.
  Moreover $\widetilde B/G \simeq B$ and $\pi:X\simeq (F\times
  \widetilde B)/G\rightarrow B\simeq \widetilde B/G$ is the map induced by the
  projection of $F\times \widetilde B$ onto $\widetilde B$, in
  particular every smooth fibre of $\pi$ is an elliptic curve isomorphic to
  $F$.

  Observe now that a suitable pluricanonical map factorizes through $\pi$
  and an embedding of $B$ in some projective space
  \cite[Proposition IX.3, p.~108]{beauville:surfaces}.
  Let $f:X\rightarrow X$ a nontrivial self-map.
  Pulling back multiples of canonical
  divisors by $f$ induce a map $f_B:
  B \rightarrow B$ such that the following diagram commutes
  \begin{equation*}
    \xymatrix{ X \ar[r]^{f} \ar[d]_{\pi}& X \ar[d]^{\pi} \\ B \ar[r]^{f_B}
    & B }
  \end{equation*}
  It follows that $\deg f=\deg( f_B)\deg(f_{|F})$ where $\deg(f_{\vert F})$
  denotes the degree of the restriction of $f$ to a smooth fibre.

  If $g(B)> 1$ then $\deg( f_B)=1$ and $X$ fails to admit a nontrivial
  self-map of degree a given prime for an infinite number
  of primes because the same holds for $F$ (Remark \ref{remark:curves}).
  If B is an elliptic curve, any prime appearing as the degree of an
  endomorphism of $X$ is also the degree of an endomorphism of
  the abelian surface $B\times F$, the missing primes are infinite by
  Corollary \ref{corollary:abelianvarieties}.

  If $B$ is rational
  \begin{equation*}
    \mathrm{H}^{1,0}(S) \cong \mathrm{H}^{1,0}(F\times \widetilde B)^G \cong
    \mathrm{H}^{1,0}(F)^G\oplus \mathrm{H}^{1,0}(\widetilde B)^G \cong
    \mathrm{H}^{1,0}(F/G)\oplus \mathrm{H}^{1,0}(B).
  \end{equation*}
  Since $\chi(\mathcal{O}_X)=0$ we have $q=1$ and $F/G$ is elliptic.
  The fibres of the natural
  map $\alpha: (F\times \widetilde B)\rightarrow F/G$ are connected and
  isomorphic to $\widetilde B$, then
  $\alpha$ is the Albanese map of $X$. By the universal property of the
  Albanese
  map there exists a $\varphi: F/G \rightarrow F/G$ such that $\alpha \circ f=
  \varphi \circ \alpha$. Again $\deg(f)=\deg(\varphi)\deg(f_ B)$ and we
  can conclude as above.

  The case of hyperlliptic surfaces is analogous, since in this case the
  Albanese variety is an elliptic curve and the fibres of the  Albanese map
  are isomorphic elliptic curves.
\end{proof}

\section{The case $k(X)=-\infty$: toric surfaces}
We are going to show that, in the case of a toric surfaces,
the presence of curves of
negative self-intersection implies that,
up to a finite set,
degrees of nontrivial self-maps
of the surface are not square free. But
before dealing with the specific case of toric surfaces, we briefly recall how
a general nontrivial self-map of a surface $X$ acts on the set of curves
of $X$.

\begin{remark} \label{remark:negativecurves}
  Let $X$ be a surface with a nontrivial surjective endomorphism,
  $f:X\rightarrow X$.
  Let $C$ and $D$ be
  irreducible curves on $X$ such that $f(C)=D$.
  Since $f_*\circ f^*=\deg(f) \mathrm{Id}$ on $\mathrm{NS}(X)$,
  there
  are positive integers $a,b$ such that $f_*( C)\numeq a D$ and $f^*(
  D)\numeq b C$ where
  $\deg f=ab$.
  As a consequence, $C^2=0$ if and only if $D^2=0$,
  hence the image under a nontrivial self-map of an irreducible curve
  with zero self-intersection 
  is always a curve with the same property.
  Analogously $ C^2<0$ if and only if $ D^2<0$ and in this case we
  also have $f^{-1}(D)=C$: in fact any two distinct components $f^{-1}(D)$
  should be curves with negative self-intersection whose classes in
  $\mathrm{NS}(X)$ are linearly dependent
  (by injectivity of $f_{*}$).
  Hence, if we denote
  \begin{equation*}
    \mathcal{S}_X=\{C\ \mathrm{irreducible}\ \mathrm{curve}|\ C^2<0\}
  \end{equation*}
  the map of sets $\hat{f}:
  \mathcal{S}_X\rightarrow \mathcal{S}_X$, $C\mapsto f(C)$ is bijective.
\end{remark}

In the proof of Proposition \ref{proposition:toric} below
we will make use of the
following elementary statement. Since we will need it in the subsequent
sections
we state it in the form of a Lemma.

\begin{lemma}
  Let $X$ be a surface that contains one and only one curve $C$ such that
  $C^2<0$. If $f:X\rightarrow X$ is a nontrivial self-map then $\deg(f)$ is a
  square.
  \label{lemma:negativecurve}
\end{lemma}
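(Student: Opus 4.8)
The plan is to exploit the bijectivity of the induced map on negative curves that was just established in Remark \ref{remark:negativecurves}, combined with the numerical relations recorded there. First I would observe that by hypothesis $\mathcal{S}_X = \{C\}$ consists of the single element $C$. Since Remark \ref{remark:negativecurves} shows that $\hat{f}:\mathcal{S}_X \to \mathcal{S}_X$, $C \mapsto f(C)$, is a bijection of this one-point set, we must have $f(C)=C$, and moreover $f^{-1}(C)=C$ as in the remark. In particular $f$ restricts to a surjective endomorphism of $C$ onto itself.

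With $D=C$ in the remark, I then have positive integers $a,b$ with
\begin{equation*}
  f_*(C)\numeq a C, \qquad f^*(C)\numeq b C, \qquad \deg f = ab.
\end{equation*}
The heart of the argument is to show that in fact $\deg f = b^2$, so that the degree is automatically a square. For this I would apply the projection formula to the self-intersection of $f^*C$. On one hand, pushing forward the $0$-cycle $f^*C\cdot f^*C$ preserves its degree, and the projection formula $f_*(f^*C\cdot \beta)=C\cdot f_*\beta$ with $\beta=f^*C$ gives
\begin{equation*}
  f^*C\cdot f^*C = f_*\bigl(f^*C\cdot f^*C\bigr) = C\cdot f_*\bigl(f^*C\bigr) = C\cdot\bigl(\deg f\cdot C\bigr) = \deg f\cdot C^2,
\end{equation*}
using $f_*f^* = \deg f\cdot \mathrm{Id}$ on $\mathrm{NS}(X)$. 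On the other hand, substituting $f^*C\numeq bC$ yields $f^*C\cdot f^*C = b^2 C^2$. Equating the two expressions gives $b^2 C^2 = \deg f\cdot C^2$, and since $C^2<0$ is nonzero I may cancel it to conclude $\deg f = b^2$.

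The argument is essentially forced once $f(C)=C$ is in hand, so the only genuine point requiring care is the intersection-theoretic computation: I expect the main obstacle to be making sure the projection formula is invoked in a justified way and that $f^*f_*$ is not needed (only $f_*f^*=\deg f\cdot\mathrm{Id}$, which was already used in the remark). The hypothesis $C^2\neq 0$ is exactly what permits the final cancellation, so I would emphasize that it is used in an essential way. This completes the plan, giving $\deg f = b^2$ as required.
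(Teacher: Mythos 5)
Your proof is correct and follows essentially the same route as the paper: both reduce to $f(C)=C$ via Remark \ref{remark:negativecurves} and then apply the projection formula to an intersection number involving $C$, cancelling $C^2\neq 0$ at the end. The only cosmetic difference is that you compute $(f^*C)^2$ two ways, while the paper equates $f^*C\cdot C$ with $C\cdot f_*C$ directly; the two computations are equivalent.
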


\begin{proof}
  In view of the above Remark \ref{remark:negativecurves}
  we have $f(C)=C$, $f^* C\numeq a_1 C$ and $f_* C\numeq a_2 C$
  for suitable integers such
  that $\deg f=a_1a_2$. By the Projection Formula
  \begin{equation*}
    a_1 C^2=f^* C \cdot C = C \cdot f_* C =a_2
    C \cdot  C = a_2  C^2
  \end{equation*}
  hence $a_1=a_2$ and $\deg f$ is a square.
\end{proof}

\begin{proposition}
  The only toric surface admitting nontrivial self-maps
  of any given degree is $\mathbb{P}^1\times \mathbb{P}^1$.  If $S \not\simeq
  \mathbb{P}^1\times \mathbb{P}^1$ is a toric surface, $S$ fails to have a
  nontrivial self-map of prime degree for all but at most a
  finite number of primes. \label{proposition:toric}
\end{proposition}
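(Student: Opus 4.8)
The plan is to split toric surfaces according to whether or not they carry irreducible curves of negative self-intersection, and to treat $\mathbb{P}^1\times\mathbb{P}^1$ separately as the sole exception. The easy half of the statement is that $\mathbb{P}^1\times\mathbb{P}^1$ really does admit nontrivial self-maps of every degree $n\ge 2$: choosing $g:\mathbb{P}^1\to\mathbb{P}^1$ of degree $n$ and setting $f=g\times\mathrm{id}$ produces a self-map of degree $n$. So the substance is to show that every toric surface $S\not\cong\mathbb{P}^1\times\mathbb{P}^1$ fails to carry a self-map of prime degree for all but finitely many primes.

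First I would dispose of the surfaces with no negative curve. A smooth complete surface containing no irreducible curve of negative self-intersection contains in particular no $(-1)$-curve, hence is minimal; since $S$ is toric it is rational, so it is $\mathbb{P}^2$ or a Hirzebruch surface $\mathbb{F}_a$ with $a=0$ or $a\ge 2$. As $\mathbb{F}_a$ has a section of self-intersection $-a<0$ for $a\ge 2$, the only negative-curve-free toric surfaces are $\mathbb{P}^2$ and $\mathbb{F}_0\cong\mathbb{P}^1\times\mathbb{P}^1$. For $S=\mathbb{P}^2$ one uses $\mathrm{NS}(\mathbb{P}^2)=\mathbb{Z}\cdot L$ with $L$ a line: any self-map satisfies $f^\ast L\numeq dL$, whence $\deg f=(f^\ast L)^2=d^2$ is a perfect square and no prime can occur as a degree.

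The core of the argument treats $S$ with a nonempty set $\mathcal{S}_S$ of negative curves. The key finiteness input is that every irreducible curve $C$ with $C^2<0$ on a toric surface is torus-invariant: were $C$ not invariant, its translates under the torus would sweep out a positive-dimensional family of curves numerically equivalent to $C$, and two distinct members $C,C'$ would satisfy $C\cdot C'=C^2<0$, impossible for distinct irreducible curves. Hence $\mathcal{S}_S$ lies inside the finite set of torus-invariant curves, and $M:=\max_{C\in\mathcal{S}_S}(-C^2)$ is a finite positive integer depending only on $S$. Now let $f:S\to S$ be a nontrivial self-map of prime degree $p$ and pick $C^\ast\in\mathcal{S}_S$ realizing the maximum, $-(C^\ast)^2=M$. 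By Remark \ref{remark:negativecurves} the map $\hat f$ is a bijection of $\mathcal{S}_S$, so $C^\ast$ has a unique preimage $C'\in\mathcal{S}_S$ with $f(C')=C^\ast$; writing $f_\ast C'\numeq aC^\ast$ and $f^\ast C^\ast\numeq bC'$ with $ab=p$, the Projection Formula gives $b(C')^2=a(C^\ast)^2$, i.e. $M/(-(C')^2)=b/a\in\{p,1/p\}$ because $\{a,b\}=\{1,p\}$. The value $1/p$ would force $-(C')^2=pM>M$, contradicting the maximality of $M$; hence $M/(-(C')^2)=p$, and since $-(C')^2\ge 1$ we obtain $p\le M$. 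Thus every prime occurring as a self-map degree of $S$ is at most $M$, so only finitely many do, which gives the claim (and incidentally refines Lemma \ref{lemma:negativecurve}).

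I expect the main obstacle to be precisely the finiteness statement in the third paragraph: one must a priori exclude infinite families of negative curves and secure a uniform bound on how negative their self-intersections can be, and it is the torus-invariance argument that provides both at once. Once $M$ is known to be finite, the Projection Formula computation at the most negative curve is immediate. A secondary point requiring care is the separate handling of $\mathbb{P}^2$: there the absence of negative curves makes the main argument inapplicable, so the N\'eron--Severi computation must be invoked instead.
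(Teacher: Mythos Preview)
Your proof is correct and follows the same overall strategy as the paper---separate off $\mathbb{P}^2$ and $\mathbb{P}^1\times\mathbb{P}^1$, then exploit the finiteness of $\mathcal{S}_S$ on a toric surface---but the way you extract the bound on primes is different. The paper splits into the one-curve case (handled by Lemma~\ref{lemma:negativecurve}) and the multi-curve case; in the latter it invokes the classification of rational surfaces to locate a $(-1)$-curve $D$, then computes $(f^*D)^2=-\deg f=b^2C^2$ to conclude that a prime degree must equal $-C^2$ for some $C\in\mathcal{S}_S$. You instead work at the \emph{most} negative curve $C^\ast$ and use the projection formula $b(C')^2=a(C^\ast)^2$ together with maximality of $M=-(C^\ast)^2$ to force $p\le M$. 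Your route is slightly more economical: it avoids the appeal to minimality to find a $(-1)$-curve, it handles the single-curve case uniformly (there $C'=C^\ast$ forces $a=b$, impossible for $ab=p$ prime), and it yields the explicit bound $p\le M$. The paper's route, on the other hand, makes the arithmetic of the degree especially transparent, giving the stronger statement that $\deg f=b^2(-C^2)$ is non-square-free outside a finite set.
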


\begin{proof}
  Let $X$ be a toric surface, by the classification of toric surfaces,
  see \cite[Theorem 1.28, p.~42]{oda:convex} for example, $X$ is obtained
  by a finite number of equivariant blow-ups from the projective plane or
  a Hizerbruch surface $\mathbb{F}_n$, $n \ge 0$.  The only cases in which
  $\mathcal{S}_X$ is empty are either $X\simeq \mathbb{P}^2$ or $X\simeq
  \mathbb{P}^1\times \mathbb{P}^1$.
  All the nontrivial self-maps of the former have degree a square,
  the latter instead has nontrivial self-maps of
  any given degree.

  Suppose now that $\mathcal{S}_X$ is nonempty. Since $X$ is a toric
  surface, any
  irreducible curve on $X$ with negative self-intersection is included in
  the complement of the torus,
  hence $\mathcal{S}_X$ is finite.
  If $\mathcal{S}_X$ consists of
  only one element, then our claim follows by Lemma
  \ref{lemma:negativecurve}.
  If $\mathcal{S}_X$
  contains more than one element then $X$ is not minimal as it follows by the
  classification of rational surfaces.
  Let $D\in
  \mathcal{S}_X$ be a $-1$-curve on $X$ and $C\in \mathcal{S}_X$ such
  that $f(C)=D$, recall Remark \ref{remark:negativecurves}. We have
  \begin{equation} \label{equation:toric}
    b^2  C^2 =f^*D \cdot f^*D =f^*( D \cdot D)= -\deg(f)
  \end{equation}
  and since $\mathcal{S}_X$ is finite $C^2$ in \eqref{equation:toric}
  can only take a finite number of values. It follows that apart from a finite
  number of values $\deg(f)$ is not square free.
\end{proof}

\section{The case $K(X)=-\infty$: $\mathbb{P}^1$-bundles over a
non rational curve}
First
of all we are going to introduce notations that will be used in this and
the next Section.

\begin{notations}
  We denote by $\mathcal{E}$ a rank two vector bundle over a curve $B$
  of genus $g(B)$ greater than or equal to one, and put
  $X=\mathbb{P}(\mathcal{E})$. Moreover we denote by $\pi:X\rightarrow B$ the
  projection associated to the projective bundle structure. Since $g(B)\ge
  1$, given a nontrivial self-map $f:X\rightarrow X$,
  it induces a nontrivial self-map of $B$ that we denote by $f_{B}$.
\end{notations}

Note that $\deg(f)=\deg(f_B)\cdot \deg(f_{\mathbb{P}^1})$, where
$\deg(f_{\mathbb{P}^1})$ denotes the degree of $f$ when restricted to a
fibre, this degree does not depend on the particular chosen fibre. Moreover
if $\deg(f_B)>1$ i.e. $f_B$ is a nontrivial surjective endomorphism, $B$
is an elliptic curve.

We are going to analyze case (vi) of Theorem \ref{theorem:classification}.
The main result of this section is the following:

\begin{theorem}
  Let $\mathcal{E}$ be a rank two vector bundle over a projective curve $B$
  of genus $g>1$.
  The projective bundle $X=\mathbb{P}(\mathcal{E})$ admits 
  a nontrivial self-map 
  of degree $n$ for every positive integer $n$ if and only if
  \begin{enumerate}
    \item[(i)] $\mathcal{E}\simeq
      \mathcal{O}\oplus \mathcal{L}$ where $\mathcal{L}$ is either a trivial or
      a torsion line bundle of order $p$ a prime and in the latter case
    \item[(ii)] for every $m\in (\mathbb{Z}/p \mathbb{Z})^*$ there exists an
      automorphism $\varphi\in \mathrm{Aut}(B)$
      such that $\varphi^{*}(\mathcal{L})\simeq \mathcal{L}^{\pm m}$.
  \end{enumerate}
  \label{theorem:projectivebundles}
\end{theorem}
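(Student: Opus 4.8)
The plan is to prove this biconditional by analyzing how a nontrivial self-map $f:X\to X$ must interact with the $\mathbb{P}^1$-bundle structure, and then to translate the existence of self-maps of all prime degrees into the stated arithmetic condition on the torsion line bundle. I would work exclusively with prime degrees, as justified in the introduction, and break the argument into the necessity and sufficiency directions.

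\smallskip

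\textbf{Necessity.} Suppose $X=\mathbb{P}(\mathcal{E})$ admits nontrivial self-maps of every degree. Since $g(B)>1$, every self-map $f$ induces $f_B:B\to B$, which must be an isomorphism (a curve of genus $>1$ has no nontrivial self-map, by Remark~\ref{remark:curves}). Hence $\deg(f)=\deg(f_{\mathbb{P}^1})$, so the entire degree is carried by the fibrewise behavior. My first step is to show that the bundle must be decomposable of the form $\mathcal{O}\oplus\mathcal{L}$: a self-map of prime degree $p$ restricting to degree $p$ on each fibre pulls back the tautological/relative structure, and one analyzes $f^*\mathcal{O}_X(1)$ in $\mathrm{Pic}(X)\cong \pi^*\mathrm{Pic}(B)\oplus\mathbb{Z}$ to constrain $\mathcal{E}$. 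The existence of the required self-maps forces $\mathcal{E}$ to be S-equivalent to a bundle split by a torsion line bundle; I would invoke the structure of self-maps of ruled surfaces from \cite{nakayama:ruled}, under which a fibrewise-nontrivial endomorphism of $\mathbb{P}(\mathcal{O}\oplus\mathcal{L})$ exists only when $\mathcal{L}$ is torsion, say of order $k$. Next, observe that a degree-$p$ self-map acts on the $k$-torsion structure and, composed with $f_B=\varphi^{-1}$, produces the compatibility $\varphi^*\mathcal{L}\simeq\mathcal{L}^{\pm m}$; realizing \emph{every} $m\in(\mathbb{Z}/k\mathbb{Z})^*$ as a degree forces $k$ to be prime (so that $(\mathbb{Z}/k\mathbb{Z})^*$ is cyclic of the right shape and every residue is attainable) and yields condition (ii).

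\smallskip

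\textbf{Sufficiency.} Conversely, given $\mathcal{E}\simeq\mathcal{O}\oplus\mathcal{L}$ with $\mathcal{L}$ either trivial or $p$-torsion satisfying (ii), I construct self-maps of each prime degree $q$ explicitly. For $q=p$, a power map on the fibre coordinates of the $\mathbb{P}^1$-bundle determined by the splitting gives a degree-$p$ self-map covering $\mathrm{id}_B$. For a prime $q\ne p$, write $q\equiv m\pmod p$ with $m\in(\mathbb{Z}/p\mathbb{Z})^*$; by hypothesis there is $\varphi\in\mathrm{Aut}(B)$ with $\varphi^*\mathcal{L}\simeq\mathcal{L}^{\pm m}$, and the isomorphism $\varphi^*X\simeq X$ combined with a fibrewise degree-$q$ map (the $q$-th power map, which is well-defined modulo the $p$-torsion twist precisely because $q\equiv \pm m$) assembles into a self-map of $X$ of degree $q$ covering $\varphi$. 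In the trivial case $\mathcal{L}\cong\mathcal{O}$, the surface is $\mathbb{P}^1\times B$ and the product of a degree-$q$ map on $\mathbb{P}^1$ with $\mathrm{id}_B$ works directly.

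\smallskip

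\textbf{Main obstacle.} I expect the hard part to be the necessity direction, specifically the step forcing $\mathcal{E}$ to split as $\mathcal{O}\oplus\mathcal{L}$ with $\mathcal{L}$ torsion, and then pinning down the precise arithmetic $\varphi^*\mathcal{L}\simeq\mathcal{L}^{\pm m}$ from the mere existence of a degree-$q$ self-map. The subtlety is that a self-map need not preserve the splitting, so one must argue via the action on $\mathrm{Pic}(X)$ and on the finitely many sections of minimal self-intersection (the analogue of the negative-curve analysis in Remark~\ref{remark:negativecurves}) to recover how $f$ permutes or twists the distinguished sections. Showing that realizing all residues in $(\mathbb{Z}/k\mathbb{Z})^*$ is impossible unless $k$ is prime — and extracting the $\pm$ sign ambiguity correctly, coming from whether $f$ fixes or swaps the two distinguished sections of $\mathbb{P}(\mathcal{O}\oplus\mathcal{L})$ — is where the careful bookkeeping lies.
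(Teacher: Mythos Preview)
Your sufficiency direction is essentially the paper's: it coincides with the ``if'' part of Proposition~\ref{proposition:projciclic}, including the explicit fibrewise power maps and the twist by $\varphi$.

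The genuine gap is in necessity, exactly at the point you flag as the obstacle. Your proposed mechanism---tracking sections of minimal self-intersection and the action on $\mathrm{Pic}(X)$, or invoking \cite{nakayama:ruled}---does not by itself force $\mathcal{E}$ to split as $\mathcal{O}\oplus\mathcal{L}$. The problem is that $X$ may admit a degree-$2$ self-map while containing \emph{no} section of self-intersection zero, so there is nothing for your section-permutation argument to grab. The paper handles this by looking not at sections but at the \emph{ramification divisor} $R_f$ of a degree-$2$ self-map: one has $R_f^2=0$ and $R_f\cdot F=2$, so either $R_f$ is a disjoint union of two sections (and then $\mathcal{E}$ splits with a torsion $\mathcal{L}$), or $R_f$ is irreducible and $\pi|_{R_f}:R_f\to B$ is an \'etale double cover with $\mathcal{N}_{R_f/X}$ torsion of some order $m>2$ (Proposition~\ref{proposition:cases}). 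This second case is not visibly split over $B$; it only becomes $\mathbb{P}(\mathcal{O}\oplus\mathcal{N}_{R_f/X})$ after base change to $R_f$.

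Ruling out this second case is the step your outline does not contain. The paper does it by showing that such an $X$ admits \emph{no} self-map of degree $m$: every curve $C\ne R_f$ with $C^2=0$ satisfies $m\mid C\cdot F$ (seen after the base change to $R_f$, using Remark~\ref{remark:torsion}), and then a putative degree-$m$ map $g$ must satisfy $g(R_f)=R_f$ with $g^*R_f=aR_f+\sum b_iC_i$; intersecting with $F$ forces $a\ge 2$, whence $\hat g^*\mathcal{N}_{R_f/X}\simeq\mathcal{N}_{R_f/X}^a$ has order $m/a<m$, contradicting that $\hat g$ is an automorphism of $R_f$. Without this argument (or a substitute), your necessity proof cannot conclude that $\mathcal{E}$ splits over $B$ itself rather than merely after an \'etale cover. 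Finally, your reason for $k$ prime (``$(\mathbb{Z}/k\mathbb{Z})^*$ cyclic of the right shape'') is not the operative one; the paper's argument is simply that $\varphi^*$ preserves the order of a torsion line bundle, so $\varphi^*\mathcal{L}\simeq\mathcal{L}^{\pm r}$ forces $\gcd(r,k)=1$ for every $r$, which fails for $r$ a proper prime divisor of $k$.
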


Before proving the above Theorem we recall some well known facts on
$\mathbb{P}^{1}$-bundles over a smooth projective curve, and in the meantime we
will establish some notations.

\begin{remark}
  \label{remark:nsproj}
  Suppose  that $X=\mathbb{P}(\mathcal{E})$ where $\mathcal{E}$
  is a locally free sheaf of rank
  two vector bundle of degree $e$ on the smooth curve $B$.
  We have $\mathrm{NS}(X)=\mathbb{Z}
  H+\mathbb{Z} F$, were $H$ is a divisor such that
  $\mathcal{O}_X(H)\simeq \mathcal{O}(1)$
  and $F$ is a fibre of the
  projection $\pi$. If $D$ is a divisor on $X$ such that $D^2=0$ its class
  in $\mathrm{NS}(X)$ is either a multiple of $F$ or a multiple
  of $H+\frac{e}{2}F$.

  If $f:X\rightarrow X$ is a nontrivial self-map
  that induces an automorphism on the base $B$
  the
  ramification divisor $R$ of $f$ satisfies
  $R\lineq K_{X/B}-f^*K_{X/B}$ and
  moreover
  we have $K_{X/B}\numeq -2H-eF$.
  It follows that $K_{X/B}^2=0$ and $(f^*K_{X/B})^2=0$ which
  implies $f^*K_{X/B}\numeq -2(\deg f)H-(\deg f)eF$. Summing up
  we have
  \begin{equation*}
    R\numeq (1-\deg f)K_{X/B}\ \mathrm{and}\ R^2=0\ .
  \end{equation*}
\end{remark}

\begin{remark}
  \label{remark:normals}
  If $X=\mathbb{P}(\mathcal{E})$ admits two disjoint sections,
  $\mathcal{E}$ is isomorphic to
  the direct sum of two line bundles, and then
  $X=\mathbb{P}(\mathcal{O}\oplus \mathcal{L})$
  up to an isomorphism obtained tensoring by a suitable line bundle.
  If $\mathcal{L}$ is nontrivial, we denote by $S_1,S_2$ the
  sections of $X$ corresponding respectively to
  the line bundles
  $\mathcal{O}$ and $\mathcal{L}$, and by $s_1,s_2:B\rightarrow
  X$ the associated embeddings.
  We have $\mathcal{N}_{S_1/X}\simeq \mathcal{N}_{S_2/X}^\vee \simeq
  \mathcal{L}$,
  $S_1^2= \deg (\mathcal{L})=e$ and $S_2^2= -\deg(\mathcal{L})=-e$.

  If $\deg(\mathcal{L})>0$, $S_2$ is the only curve on $X$ of negative
  self-intersection. In case $\deg(\mathcal{L})=0$ and $\mathcal{L}$ is not
  trivial, the curves $S_1,S_2$ are the only sections of zero
  self-intersection
  and any class in $\mathrm{NS}(X)$ whose square equals zero is
  either a multiple of the class of $H$ or a multiple of the class of $F$.
\end{remark}

\begin{remark}\label{remark:torsion}
  Curves dominating $B$ and
  of zero self-intersection on $X= \mathbb{P}(\mathcal{O}\oplus
  \mathcal{L})$, $\mathcal{L}$ a torsion line bundle of order $k$, will play a
  central role in the proof of Theorem \ref{theorem:projectivebundles}.
  A particular	curve of this type is given by the \'etale cyclic cover
  $j:\widetilde B\rightarrow B$
  of degree $k$ determined by $\mathcal{L}$.
  We are going to recall how to construct $\widetilde B$, see for instance
  \cite[p.~54]{bpv:ccs}. Denote by $\mathbf{L}$ the total space of
  $\mathcal{L}$, and by $\mathrm{pr}_{\mathbf{L}}: \mathbf{L}\rightarrow B$ the
  bundle projection. The zero divisor of the section
  $1-l^k$ in $\mathbf{L}$, where $l\in
  \Gamma(\mathbf{L},\mathrm{pr}_{\mathbf{L}}^*\mathcal{L})$ is the tautological
  section, is the curve $\widetilde B$ and $j$ is the restriction to
  $\widetilde
  B$ of the bundle map.
  Observe that there is a canonical isomorphism of varieties
  $\mathbf{L}\simeq X\setminus S_{2}$, and that through this isomorphism the
  image of $\widetilde B$ is disjoint from $S_1$.
  By the above description
  the projective irreducible curve $\widetilde B$
  is a principal divisor in
  $X\setminus S_{2}$ and the normal bundle to $\widetilde B$ in $X$
  is trivial.

  Let $D\ne S_{1}, S_{2}$ be an irreducible curve in $X$ dominating $B$
  such that $D^2=0$, we are going to show that
  $D\subset X\setminus S_{2}\simeq \mathbf{L}$ and there exists
  $a\in \mathbb{C}^*$
  such that the automorphism of varieties
  \begin{equation*}
    \mu_a: \mathbf{L} \rightarrow \mathbf{L}
  \end{equation*}
  induced by multiplication by $a$,
  sends $D$ to $\widetilde B$.
  Since $D^2=0$ and $D$ dominates $B$,
  the divisor $D$ is numerically
  equivalent to a multiple of $S_{1}$ and $S_{2}$,
  in particular it is disjoint from $S_{1}\cup S_{2}$ and
  thus
  lies in
  the complement of the zero section of $X\setminus S_{2}\simeq
  \mathbf{L}$ that coincides with $S_1$.
  Since $\widetilde B$ is a principal divisor in $X\setminus S_2$,
  the intersection of $\widetilde B$
  with a different projective curve included in $X \setminus S_2$
  is empty.
  Since multiplication by scalars acts transitively on the non zero elements of
  the fibres of $\mathbf{L}$,
  there exists $a\in \mathbb{C}^{*}$ such that
  $\widetilde B\cap \mu_a(D)$ is non empty and this forces $\widetilde
  B=\mu_a(D)$.
  In particular the restriction of $\pi$ to such a $D$ gives an \'etale
  covering of $B$ isomorphic to $j:\widetilde B \rightarrow B$, and $D\cdot
  F=k$.
\end{remark}

Theorem \ref{theorem:projectivebundles} will be a consequence of the next
two Propositions.
In Proposition \ref{proposition:projciclic}
we study nontrivial self-maps of surfaces of the form
$\mathbb{P}(\mathcal{O}\oplus \mathcal{L})$ where $\mathcal{L}$
is a torsion line bundle on a curve $B$ of genus $g(B)\ge1$.
In Proposition \ref{proposition:cases} we characterize $\mathbb{P}^1$-bundles
on a curve of genus greater then one admitting an endomorphism of degree two.

\begin{proposition}
  \label{proposition:projciclic}
  Suppose $X=\mathbb{P}(\mathcal{O}\oplus \mathcal{L})$,
  where $ \mathcal{L}$ is a torsion line bundle on $B$ of order $k>1$, and the
  genus of $g(B)\ge 1$.
  $X$ admits a nontrivial self-map $f:X\rightarrow X$
  such that $f_B \in \mathrm{Aut}(B)$ and $\deg (f)=d$
  if and
  only if
  either
  \begin{enumerate}
    \item [(i)] $k|d$, or
    \item [(ii)] there exists $\varphi\in \mathrm{Aut}(B)$ such that
      $\varphi^*\mathcal{L}\simeq \mathcal{L}^m$ with
      $d\equiv \pm m \
      \mathrm{mod}\ k$.
  \end{enumerate}
  In case (ii) there exists a nontrivial self-map $f$ of degree $d$
  such that $f_B=\varphi$ and
  either $f^*S_i=dS_i$ for $i=1,2$ or $f^*S_i=dS_j$ for $i\ne j$.
\end{proposition}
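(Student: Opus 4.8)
The plan is to prove both directions by analyzing how a nontrivial self-map $f$ with $f_B \in \mathrm{Aut}(B)$ interacts with the two distinguished sections $S_1, S_2$ and with the étale cover $\widetilde B$ described in Remark \ref{remark:torsion}. I will treat the two cases in the existence direction by explicit construction, and the necessity direction by a dichotomy on whether $f$ preserves the pair $\{S_1,S_2\}$ setwise.

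First I would establish the necessity direction. Since $\mathcal L$ is torsion of order $k$, we have $\deg(\mathcal L)=0$, so by Remark \ref{remark:normals} the sections $S_1,S_2$ are the only curves of zero self-intersection dominating $B$ other than multiples, and any class in $\mathrm{NS}(X)$ of square zero is a multiple of $H$ or of $F$. Given $f$ with $f_B=\varphi\in\mathrm{Aut}(B)$ and $\deg f=d$, I would look at $f^{-1}(S_1)$ and $f^{-1}(S_2)$. By Remark \ref{remark:negativecurves} applied to the zero-self-intersection situation, $f$ sends curves of zero self-intersection to curves of zero self-intersection, so each irreducible component of $f^{-1}(S_i)$ that dominates $B$ must itself have zero self-intersection and hence, by Remark \ref{remark:torsion}, be a translate $\mu_a(\widetilde B)$ of the étale cover (or be $S_1$ or $S_2$ itself). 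The key dichotomy is: either $f$ maps $\{S_1,S_2\}$ into $\{S_1,S_2\}$, or it does not. In the first subcase I would read off from $f^*S_i \numeq dS_i$ (or $dS_j$) the relation that $\varphi^*\mathcal L \simeq \mathcal L^{\pm m}$ with $d\equiv\pm m \bmod k$, giving alternative (ii); this is where one matches normal bundles, using $\mathcal N_{S_1/X}\simeq \mathcal L$ and $\mathcal N_{S_2/X}\simeq \mathcal L^\vee$ and the fact that $f|_{S_i}$ is an automorphism of the base with $f^* \mathcal N_{S_i/X}\simeq \mathcal N^{\otimes d}$. In the second subcase the preimage of a section must contain a genuine translate of $\widetilde B$ as a component; since $\widetilde B\cdot F = k$ and the total fibre degree of $f^{-1}(S_i)$ over a point equals $\deg f = d$, I would extract the divisibility $k\mid d$, giving alternative (i).

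For the existence direction I would construct $f$ explicitly in each case. In case (i), with $k\mid d$, the idea is to use the étale cyclic cover $j:\widetilde B\to B$: the curve $\widetilde B\subset X\setminus S_2 \simeq \mathbf L$ is a principal divisor with trivial normal bundle, so one can build a fibrewise map that ramifies appropriately to realize degree $d$ without needing any automorphism of $B$ beyond the identity; concretely I would use the cover structure to produce a self-map whose restriction to a fibre has degree $d/k$ composed with the degree-$k$ data of the torsion bundle. In case (ii), given $\varphi$ with $\varphi^*\mathcal L\simeq \mathcal L^m$ and $d\equiv\pm m\bmod k$, the automorphism $\varphi$ lifts to an isomorphism $\mathbb P(\mathcal O\oplus\mathcal L)\to \mathbb P(\mathcal O\oplus \varphi^*\mathcal L)\simeq \mathbb P(\mathcal O\oplus \mathcal L^{\pm m})$, and composing with a fibrewise power map that raises the tautological section to the appropriate power produces a self-map of the claimed degree fixing or swapping $S_1,S_2$, which is exactly the last sentence of the statement.

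The main obstacle I anticipate is the bookkeeping in the second subcase of necessity, where $f$ does not preserve $\{S_1,S_2\}$: one must rule out mixed situations where $f^{-1}(S_i)$ contains both a section and a translate of $\widetilde B$, and confirm that the fibre-degree count forces $k\mid d$ cleanly. This requires combining the numerical constraints from Remark \ref{remark:nsproj} (that $f^*K_{X/B}\numeq d\,K_{X/B}$, so the ramification is concentrated as predicted) with the rigidity of $\widetilde B$ as a principal divisor, and carefully accounting for the $\pm$ ambiguity coming from the two ways $\mathcal L^{\vee}\simeq \mathcal L^{k-1}$ can enter. The construction in case (i) also demands some care to verify the resulting map is a genuine morphism of the surface rather than merely a rational map, which I would check using the triviality of the normal bundle of $\widetilde B$ and properness of $\pi$.
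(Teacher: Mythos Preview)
Your approach is the paper's, but the dichotomy in the necessity direction is mis-framed and this creates a real gap. The branch ``$f$ does not map $\{S_1,S_2\}$ into $\{S_1,S_2\}$'' is empty: since $f_B$ is an automorphism, each $f(S_i)$ is again a section of zero self-intersection, hence lies in $\{S_1,S_2\}$ by Remark~\ref{remark:normals}. The correct split (the paper's) is whether $f(S_1)\ne f(S_2)$ or $f(S_1)=f(S_2)$. More importantly, even when $f$ permutes $\{S_1,S_2\}$ the pull-back is in general
\[
f^*S_j \;=\; n_{1,j}S_1+n_{2,j}S_2+\textstyle\sum_\iota k_\iota C_\iota,
\qquad d=n_{1,j}+n_{2,j}+k\textstyle\sum_\iota k_\iota,
\]
with each $C_\iota$ a translate of $\widetilde B$. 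So the presence of a $\widetilde B$-component in a preimage does \emph{not} force $k\mid d$; the mixed situation is not to be ``ruled out'' but handled. The key observation you are missing is that all components of $f^*S_j$ have square zero and are numerically proportional, hence pairwise disjoint; restricting $f^*\mathcal O(S_j)$ to $S_1$ (when $f(S_1)=S_j$) therefore gives $\varphi^*\mathcal N_{S_j/X}\simeq\mathcal N_{S_1/X}^{\,n_{1,j}}$, i.e.\ $\varphi^*\mathcal L\simeq\mathcal L^{\pm n_{1,j}}$ with $d\equiv n_{1,j}\bmod k$, which is alternative~(ii) with $m=n_{1,j}$ rather than $m=d$. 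Alternative~(i) arises only from the collapsed case $f(S_1)=f(S_2)=S_i$: then the \emph{other} section $S_j$ satisfies $n_{1,j}=n_{2,j}=0$, whence $d=k\sum_\iota k_\iota$.

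Your construction in case~(ii) of the existence direction is correct and is what the paper does. For case~(i), however, the sketch via the \'etale cover $\widetilde B$ is too vague and the cover in fact plays no direct role. The paper instead uses the trivialization $i:\mathcal L^{\otimes d}\xrightarrow{\sim}\mathcal O$ (available because $k\mid d$) to write an explicit degree-$d$ map of total spaces,
\[
(\alpha,l)\;\longmapsto\;\bigl(\alpha^d+i(l^{\otimes d}),\ \alpha\, l^{\otimes d-1}\bigr)\in\mathbf O_x\oplus\mathbf L_x^{\otimes d-1},
\]
inducing a morphism $\mathbb P(\mathcal O\oplus\mathcal L)\to\mathbb P(\mathcal O\oplus\mathcal L^{d-1})\simeq\mathbb P(\mathcal O\oplus\mathcal L)$; the mixing term $i(l^{\otimes d})$ is precisely what ensures the induced map is a genuine degree-$d$ morphism on every fibre.
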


\begin{proof}
  We begin by proving the only if part of our statement.
  Since $f_B$ is an
  automorphism $f(S_i)$ is a section. Moreover, see Remark
  \ref{remark:negativecurves}, $(f_*(S_i))^2=0$,
  therefore either $f(S_{i})=S_{1}$ or
  $f(S_{i})=S_{2}$ for $i=1,2$.
  Then there are three possible cases
  \begin{gather}
    f(S_{1})=S_{1}\ \mathrm{and}\ f(S_{2})=S_{2} \label{equation:case1} \\
    f(S_{1})=S_{2}\ \mathrm{and}\  f(S_{2})=S_{1} \label{equation:case2} \\
    f(S_{1})=f(S_{2})=S_{i}\ \mathrm{for}\ \mathrm{either}\ i=1\ \mathrm{or}\
    i=2
    \label{equation:case3}
  \end{gather}
  and in any of the cases above the pullback of the divisor $S_{j}$ is given by
  \begin{equation} \label{equation:pullback}
    f^*S_j=n_{1,j}S_1+n_{2,j}S_2+\sum_{\iota}k_\iota C_\iota
  \end{equation}
  where $n_{1,j},n_{2,j}$ are nonnegative integer,
  $k_\iota$ are positive integers,
  and $C_\iota\ne
  S_1,S_2$ are distinct irreducible curves.
  Since $X$ contains no curve with negative self-intersection, the irreducible
  components of
  $f^*(S_j)$ are disjoint, $C_\iota$ dominates $B$,
  and $C_\iota ^{2}=0$ for every $\iota$.
  By Remark
  \ref{remark:torsion} we also know that  $C_{\iota}$ intersects
  transversally every fibres
  of $\pi$ in $k$ points.
  Since $f^{*}(F)\lineq F$ we are able to recover the degree of $f$
  as the intersection number between $F$ and $f^*S_j$ hence
  $$d=n_{1,j}+n_{2,j}+k\sum_{\iota}k_\iota .$$
  In case \eqref{equation:case3} there exists $j$ such that
  $n_{1,j}=n_{2,j}=0$ and the
  intersection number between $f^*S_j$ and $F$ is  $k\sum_{\iota}k_\iota$.
  In cases \eqref{equation:case1}--\eqref{equation:case2} we have
  $f(S_{1})=S_{j}$ the multiplicity $n_{2,j}$ is zero and
  $d\equiv n_{1,j}\ \mathrm{mod}\ k$, $j=1,2$. We have then
  \begin{equation}
    f^*_B \mathcal{N}_{S_j/X}\simeq f_B^* s_j^* \mathcal{O}(S_j)\simeq s_1^*f^*
    \mathcal{O}(S_j)\simeq s_1^* \mathcal{O}(n_{1,j}S_1)\simeq
    (\mathcal{N}_{S_1/X})^{
    n_{1,j}}\ .
    \label{equation:normal}
  \end{equation}
  Since $\mathcal{N}_{S_1/X}\simeq \mathcal{N}_{S_2/X}^{\vee}\simeq
  \mathcal{L}$
  we get
  $f^*_B \mathcal{L}\simeq \mathcal{L}^{ n_{1,1}}$ in case
  \eqref{equation:case1}
  and $f^*_B
  \mathcal{L}\simeq \mathcal{L}^{ -n_{1,2}}$ in case \eqref{equation:case2}.
  Setting $\varphi=f_{B}$ we get
  $\varphi^*\mathcal{L}\simeq \mathcal{L}^m$ with
  $d\equiv	m \
  \mathrm{mod}\ k$ for $m=n_{1,1}$ in case \eqref{equation:case1} and
  $\varphi^*\mathcal{L}\simeq \mathcal{L}^m$ with
  $d\equiv	- m \
  \mathrm{mod}\ k$ for $m=-n_{1,2}$ in case \eqref{equation:case2}.

  Now we come to the proof of the if part of our statement.
  Denote by $\mathbf{O}, \mathbf{L}$
  respectively
  the total spaces of $\mathcal{O}$ and
  $\mathcal{L}$ and by $\mathbf{O}_x$ and $\mathbf{L}_x$ fibres of the bundle
  map over the point $x$.
  First of
  all suppose that $k|d$.
  In this case,  the  isomorphism $i:\mathbf{L}^{\otimes d}
  \rightarrow \mathbf{O}$
  enables us to construct a degree $d$ morphism of varieties
  \begin{gather*}
    F: \mathbf{O}\oplus \mathbf{L} \rightarrow \mathbf{O}\oplus
    \mathbf{L}^{\otimes d-1} \\
    (\alpha,l)\in \mathbf{O}_{x}\oplus \mathbf{L}_{x}\mapsto
    (\alpha^{d}+i(l^{\otimes d}), \alpha\cdot l^{\otimes d-1})\in
    \mathbf{O}_{x}\oplus \mathbf{L}^{\otimes d-1}_{x}.
  \end{gather*}
  Since $F$ is homogeneous on the fibres it induces a degree $d$ morphism
  $\hat{F}:\mathbb{P}(\mathcal{O}\oplus \mathcal{L}) \rightarrow
  \mathbb{P}(\mathcal{O}\oplus \mathcal{L}^{d-1})$ and composing
  with the canonical isomorphism $\mathbb{P}(\mathcal{O}\oplus
  \mathcal{L}^{d-1})\simeq \mathbb{P}(\mathcal{L}\oplus \mathcal{O})$
  induced by tensorization by $\mathcal{L}$ we get the desired endomorphism.

  Suppose now that $d\equiv \pm m\ \mathrm{mod}\ k$ and that
  there is an automorphism of $B$, $\varphi$ such that
  $\varphi^*\mathcal{L}\simeq
  \mathcal{L}^m$.
  Since $\mathcal{L}$ is of $k$-torsion, we have a degree $d$ map
  \begin{equation*}
    \mathbf{O} \oplus \mathbf{L} \rightarrow \mathbf{O}
    \oplus \mathbf{L}^{d} \simeq \mathbf{O}\oplus \mathbf{L}^{\pm m}
    \simeq \mathbf{O}\oplus \varphi^*\mathbf{L}^{\pm 1}
  \end{equation*}
  given on the fibres by
  \begin{equation*}
    (\alpha,l)\mapsto  (\alpha^{d},  l^{\otimes d})
  \end{equation*}
  and hence a map $\phi_1:\mathbb{P}(\mathcal{O}\oplus
  \mathcal{L})\rightarrow
  \mathbb{P}(\mathcal{O}\oplus \varphi^*\mathcal{L})$ of degree $d$ that
  induces the identity on the base. Moreover the natural map
  \begin{equation*}
    \phi_2: \mathbb{P}(\mathcal{O}\oplus
    \varphi^*\mathcal{L}^{\pm 1} )\simeq \mathbb{P}(\mathcal{O}\oplus
    \mathcal{L}^{\pm 1})\times_\varphi B\rightarrow
    \mathbb{P}(\mathcal{O}\oplus
    \mathcal{L}^{\pm 1})\simeq \mathbb{P}(\mathcal{O}\oplus
    \mathcal{L})
  \end{equation*}
  is an
  isomorphism that induces $\varphi$ on the base. It follows that
  $\phi=\phi_1\circ \phi_2$ is a nontrivial self-map of
  $\mathbb{P}(\mathcal{O}\oplus
  \mathcal{L})$ that induces $\varphi$ on the base.
  The final part of the statement holds setting $f=\phi$.
\end{proof}

\begin{proposition}
  Let $X=\mathbb{P}(\mathcal{E})\rightarrow B$ be a projective bundle,
  with $\mathcal{E}$ a locally free sheaf of rank two on $B$ a curve of genus
  $g(B)\ge 2$. Suppose $X$ admits a nontrivial self-map of
  degree two, then either
  \begin{itemize}
    \item[(i)] $X=\mathbb{P}(\mathcal{O}\oplus \mathcal{L})$ with
    $\mathcal{L}$ a
      torsion line bundle, or
    \item[(ii)] The ramification divisor $R_{f}$ of $f$ is a smooth irreducible
      curve, the restriction of $\pi$ to $R_{f}$ is  an \'etale double covering
      of $B$ and the normal bundle $\mathcal{N}_{R_{f}/X}$ to $R_{f}$ in $X$
      is a torsion line bundle of order strictly greater than $2$.
  \end{itemize}
  \label{proposition:cases}
\end{proposition}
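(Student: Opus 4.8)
The plan is to use that, since $g(B)\ge 2$, Remark \ref{remark:curves} forbids nontrivial self-maps of $B$, so the induced map $f_B=:\varphi$ is an automorphism and $\deg(f_{\mathbb{P}^1})=2$. I would first record the shape of the ramification divisor $R_f$. By Remark \ref{remark:nsproj} one has $R_f\numeq -K_{X/B}$, whence $R_f^2=0$ and $R_f\cdot F=2$. The morphism $f$ is proper and quasi-finite (degree two on every fibre), hence finite, and by miracle flatness it is a flat double cover of the smooth surface $X$; smoothness of the total space forces the branch divisor to be smooth, so $R_f$ is smooth and isomorphic to it. Since a degree two self-map of $\mathbb{P}^1$ ramifies at exactly two points, no fibre is a component of $R_f$, so every component of $R_f$ dominates $B$. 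Being smooth with $R_f\cdot F=2$, the curve $R_f$ is then either a disjoint union of two sections or an irreducible curve mapping $2:1$ onto $B$; these two alternatives will produce (i) and (ii) respectively.

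In the first case the two disjoint sections give, by Remark \ref{remark:normals}, a splitting $\mathcal{E}\simeq L_1\oplus L_2$, so $X\simeq\mathbb{P}(\mathcal{O}\oplus\mathcal{L})$ with $\mathcal{L}=L_2\otimes L_1^{-1}$; since $2$ is not a square, Lemma \ref{lemma:negativecurve} rules out a unique negative curve and hence, by Remark \ref{remark:normals}, forces $\deg\mathcal{L}=0$. The deck involution $\sigma$ of $f$ is fibrewise, fixes $S_1,S_2$, and is therefore $\mathrm{diag}(1,-1)$ on $\mathcal{O}\oplus\mathcal{L}$; computing the quotient fibrewise by $[u:v]\mapsto[u^2:v^2]$ gives $X/\sigma\simeq\mathbb{P}(\mathcal{O}\oplus\mathcal{L}^{2})$. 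As $f$ factors as this quotient followed by an isomorphism onto $X$ lying over $\varphi$, one gets $\mathbb{P}(\mathcal{O}\oplus\mathcal{L}^{2})\simeq\mathbb{P}(\mathcal{O}\oplus\varphi^*\mathcal{L})$, i.e. $\varphi^*\mathcal{L}\simeq\mathcal{L}^{\pm2}$. Because $g(B)\ge2$ makes $\mathrm{Aut}(B)$ finite, $\varphi^*$ has finite order $N$, and iterating yields $\mathcal{L}\simeq\mathcal{L}^{(\pm2)^{N}}$; thus $\mathcal{L}$ is torsion and we are in case (i).

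In the second case I would first determine $\rho:=\pi|_{R_f}$. Adjunction gives $2p_a(R_f)-2=R_f^2+R_f\cdot K_X=R_f\cdot K_X$; using $R_f\numeq -K_{X/B}$, $K_{X/B}^2=0$ and $K_{X/B}\cdot F=-2$ one finds $R_f\cdot K_X=4g(B)-4$, so $g(R_f)=2g(B)-1$, and Riemann--Hurwitz for the degree two map $\rho$ forces its ramification to vanish: $\rho$ is an étale double covering. Moreover $\deg\mathcal{N}_{R_f/X}=R_f^2=0$, so $\mathcal{N}_{R_f/X}\in\mathrm{Pic}^0(R_f)$. To prove it torsion I would base change along $\rho$: the pullback $X\times_B R_f\simeq\mathbb{P}(\rho^*\mathcal{E})$ contains two disjoint sections (the diagonal and the graph of the deck involution of $\rho$), so $\rho^*\mathcal{E}$ splits and $X\times_B R_f\simeq\mathbb{P}(\mathcal{O}\oplus\widetilde{\mathcal{L}})$ with $\widetilde{\mathcal{L}}$ isomorphic to the pullback of $\mathcal{N}_{R_f/X}$. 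Writing $N$ for the order of $\varphi$, the power $f^{N}$ is a self-map over $\mathrm{id}_B$ and base changes to a nontrivial self-map of $\mathbb{P}(\mathcal{O}\oplus\widetilde{\mathcal{L}})$ over $\mathrm{id}_{R_f}$; since a nontorsion $\widetilde{\mathcal{L}}$ of degree zero would leave only the two sections as self-intersection-zero multisections (Remarks \ref{remark:normals} and \ref{remark:torsion}), the normal-bundle computation \eqref{equation:normal} would give $\widetilde{\mathcal{L}}\simeq\widetilde{\mathcal{L}}^{\pm d}$ with $d\ge2$, a contradiction. Hence $\widetilde{\mathcal{L}}$, and therefore $\mathcal{N}_{R_f/X}$, is torsion.

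It remains to see that the order $k$ of $\mathcal{N}_{R_f/X}$ exceeds $2$, and this, together with the torsion statement itself, is where I expect the real difficulty. For the order I would use the double-cover identity $\mathcal{N}_{R_f/X}^{\otimes2}\simeq(f|_{R_f})^*\mathcal{N}_{B_f/X}$ (from $f^*B_f=2R_f$ and $f|_{R_f}\colon R_f\xrightarrow{\sim}B_f$), together with the fact that the branch curve $B_f=f(R_f)$ is again an irreducible self-intersection-zero double cover of $B$ whose normal bundle has the same order as that of $R_f$; comparing orders forces $k$ to be odd, so $k\ne2$, while $k=1$ (trivial normal bundle) makes $\mathcal{E}$ decomposable and throws us back into case (i). The delicate points, and the main obstacle, are exactly (a) producing the splitting after base change and pushing the nontorsion exclusion through, which requires controlling all self-intersection-zero curves on $\mathbb{P}(\mathcal{O}\oplus\widetilde{\mathcal{L}})$ as in Remark \ref{remark:torsion}, and (b) proving that $B_f$ and $R_f$ have normal bundles of equal order, which is what ultimately rules out $k=2$ and cleanly separates the genuinely indecomposable case (ii) from the decomposable case (i).
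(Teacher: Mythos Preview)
Your treatment of the reducible-$R_f$ case and of the torsion assertion in the irreducible case is correct and close to the paper's, though you take a slightly different route: the paper compares normal bundles via $f(S_1)=S_j$ directly (your quotient-by-$\sigma$ computation reaches the same relation $\varphi^*\mathcal{L}\simeq\mathcal{L}^{\pm2}$), and in the irreducible case the paper first reduces to $f(R_f)=R_f$ and then uses $\hat f^*\mathcal{N}_{R_f/X}\simeq\mathcal{N}_{R_f/X}^{\,2}$ with $\hat f$ of finite order, whereas you base-change along $\rho$ and run the argument on $\mathbb{P}(\mathcal{O}\oplus\widetilde{\mathcal{L}})$ with $f^{N}$. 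Both work.

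The genuine gap is exactly your item (b). From the double-cover identity you correctly get $\mathcal{N}_{R_f/X}^{\otimes 2}\simeq(f|_{R_f})^*\mathcal{N}_{B_f/X}$, hence $\mathrm{ord}(\mathcal{N}_{B_f/X})=k/\gcd(k,2)$. Thus the assertion $\mathrm{ord}(\mathcal{N}_{B_f/X})=k$ is \emph{equivalent} to ``$k$ is odd'', so using it to deduce that $k$ is odd is circular; you need an independent reason, and none is given. The paper avoids this by bringing in the deck involution $i$ of $f$ and splitting into subcases before ever discussing the order. If $X$ carries a section $S$ with $S^2=0$, then $S$ and $i(S)$ are disjoint sections with $\mathcal{N}_{S/X}\simeq\mathcal{N}_{i(S)/X}$ (because $i$ lies over $\mathrm{id}_B$) and $\mathcal{N}_{S/X}\simeq\mathcal{N}_{i(S)/X}^{\vee}$, forcing $\mathcal{L}:=\mathcal{N}_{S/X}$ to be $2$-torsion and placing $X$ in case (i). If there is no such section but $f(R_f)\ne R_f$, then $T:=f(R_f)$ and $i(T)$ are disjoint zero-square bisections; base-changing along $\pi|_T$ produces four disjoint sections, so the pulled-back bundle is trivial and $X$ is an \'etale $\mathbb{Z}/2\mathbb{Z}$-quotient of $T\times\mathbb{P}^1$, which again yields a section with $2$-torsion normal bundle, i.e.\ case (i). Only after these reductions does one have $f(R_f)=R_f$, and then $\hat f:=f|_{R_f}$ is an automorphism with $\hat f^*\mathcal{N}_{R_f/X}\simeq\mathcal{N}_{R_f/X}^{\,2}$; if $\mathcal{N}_{R_f/X}^{\,2}\simeq\mathcal{O}$ this forces $\mathcal{N}_{R_f/X}\simeq\mathcal{O}$, hence $k=1$ and case (i) again. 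So $k>2$ drops out of the case analysis, not from a comparison of orders between $R_f$ and $B_f$.
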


\begin{proof}
  Let $f:X\rightarrow X$ be a nontrivial self-map of degree two. Since $f_B$
  is an automorphism, the restriction of $f$ to every fibre is a double
  covering of $\mathbb{P}^1$, hence it ramifies at exactly two points.
  It follows that $R_{f}$ is a smooth curve intersecting transversally every
  fibre of $\pi$ in two points.
  Therefore $R_f$ is either union of two disjoint sections $S_{1}$ and $S_{2}$
  or it is an \'etale double covering of $B$.

  In the first case  each one of this sections has
  zero self-intersection by Lemma \ref{lemma:negativecurve}.
  If $X$ is not the trivial projective bundle $S_{1}$ and $S_{2}$ are the
  unique sections zero self-intersection. Moreover the image
  $T=f(S_{1})$ is a section of zero self-intersection by Remark
  \ref{remark:negativecurves}, therefore $T=S_j$ for either $j=1$ or $j=2$.
  As a particular case of equation \eqref{equation:nontriext} we get
  \begin{equation*}
    f^*_B \mathcal{N}_{S_j/X}\simeq (\mathcal{N}_{S_1/X})^2
  \end{equation*}
  and since $\mathcal{N}_{S_1/X}\simeq \mathcal{N}_{S_2/X}^{\vee}$ we obtain
  \begin{equation*}
    \mathrm{either}\ f^*_B (\mathcal{N}_{S_1/X}^{\vee})\simeq
    (\mathcal{N}_{S_1/X})^2
    \quad \mathrm{or}\quad
    f^*_B \mathcal{N}_{S_1/X}\simeq (\mathcal{N}_{S_1/X})^2.
  \end{equation*}
  In both cases, since $f_B$ has finite order in the group of automorphisms of
  $B$,
  $\mathcal{N}_{S_1/X}$
  (hence also $\mathcal{N}_{S_2/X}$) is a torsion line bundle.

  We turn our attention to case (ii), namely when $R_f$ is irreducible.
  We denote by $i:X\rightarrow
  X$ the involution associated with $f$.
  We may assume that
  $X$ contains no sections with zero self-intersection.
  Otherwise, denote by $S$ such a section, the curves
  $S$ and $S':=i(S)$ are numerically equivalent, hence they are disjoint. It
  follows that
  $\mathcal{N}_{S/X}=\mathcal{N}_{S'/X}^{\vee}$.
  On the other hand, since $i$ descends to the identity on $B$ it induces
  an isomorphism $\mathcal{N}_{S/X}\simeq\mathcal{N}_{S'/X}$.
  We conclude $X\simeq \mathbb{P}(\mathcal{O}\oplus \mathcal{N}_{S/X})$
  and $\mathcal{N}_{S/X}^{\otimes 2}\simeq \mathcal{O}$ and we are in case (i).

  We may also assume that $f(R_{f})=R_{f}$. Otherwise the image $T=f(R_{f})$
  satisfies, by Remark \ref{remark:negativecurves}, $T^{2}=0$ and since
  by the above argument we may suppose that $f(T)$
  is not a section, we have that $T\ne i(T)$.
  Since $T$ and $i(T)$ are numerically equivalent they are disjoint too.
  Hence making a base change by the \'etale double covering
  $\pi_{|T}:T\rightarrow B$
  we get a  $\mathbb{P}^{1}$-bundle over $T$ with four disjoint sections
  (two of them mapping onto $T$ and the others onto $i(T)$).
  So this projective bundle is trivial and $X$ is the quotient of $T\times
  \mathbb{P}^{1}$ by a $\mathbb{Z}/2\mathbb{Z}$-action without
  fixed points. Such an action is always diagonal (see Remark
  \ref{remark:action}),
  hence there exist $p_{1}, p_{2}\in \mathbb{P}^{1}$
  such that $T\times p_{i}$ is sent to itself  by the
  $\mathbb{Z}/2\mathbb{Z}$-action.
  The image $S_{i}$ of $T\times p_{i}$ in $X$ is a section and the
  pullback of its normal bundle to
  $T\times p_{i}$ is trivial. Hence $\mathcal{N}_{S_{i}/X}$ is a torsion
  line bundle of order two and we are again in case (i) of the Proposition.

  Finally assuming $f(R_f)=R_f$ and denoting by $\hat{f}:R_{f}\rightarrow
  R_{f}$ the restriction of $f$ we get
  $\hat{f}^{*}\mathcal{N}_{R_{f}/S}\simeq \mathcal{N}_{R_{f}/S}^{2}$ and since
  $\hat{f}$ has finite order, the normal bundle
  $\mathcal{N}_{R_{f}/S}$ is a torsion line bundle. We can exclude that
  $\mathcal{N}_{R_{f}/S}^{2}\simeq \mathcal{O}$. Indeed this would imply that
  $\mathcal{N}_{R_{f}/S}=\mathcal{O}$, and after a base
  change by the restriction of
  $\pi$ to $R_{f}$ we get a $\mathbb{P}^{1}$-bundle over $R_{f}$ having two
  sections with trivial normal bundles, so again it is
  the trivial $\mathbb{P}^{1}-$bundle and $X$ is as in case (i).
\end{proof}

We are now in position to prove Theorem \ref{theorem:projectivebundles}.

\begin{proof}[Proof of Theorem \ref{theorem:projectivebundles}].
  Let $X=\mathbb{P}(\mathcal{E})$ be a surface admitting a nontrivial self-map
  of degree $n$ for every $n\in \mathbb{N}$.
  By Proposition \ref{proposition:cases} either
  \begin{itemize}
    \item[(i)] $X=\mathbb{P}(\mathcal{O}\oplus \mathcal{L})$
      with $\mathcal{L}$ a torsion line bundle, or
    \item[(ii)] there exists a nontrivial self-map
      of $X$ of degree two
      such that the restriction of $\pi$ to the ramification divisor
      $R_{f}$ of $f$ is a nontrivial \'etale double covering of $B$
      and the normal bundle $\mathcal{N}_{R_{f}/X}$ to $R_{f}$ in $X$
      is a torsion line bundle of order strictly greater than $2$.
  \end{itemize}
  In case (i) we may suppose that $\mathcal{L}$ is not trivial since otherwise
  our statement is clearly true.
  Let $k\ge 2$ be the order of $\mathcal{L}$. By Proposition
  \ref{proposition:projciclic}, for every
  non zero $r \in \mathbb{Z}/k\mathbb{Z}$
  there exists $\varphi \in \mathrm{Aut}(B)$ such that
  $\varphi^{*}(\mathcal{L})\simeq
  \mathcal{L}^{\pm r}$.
  We only need to remark that $k$ must be prime since
  pulling back by an
  automorphism of $B$ preserves
  the order of a torsion line bundle.

  In case (ii) let $m$ be the order of $\mathcal{N}_{R_{f}/X}$.
  First of all
  we claim that for any curve $C\ne R_{f}$ on $X$ such that $C^{2}=0$, the
  intersection number $C\cdot F$ is a multiple of $m$.
  In fact the \'etale double covering $R_{f}\rightarrow B$ induces an \'etale
  double covering $h:R_f \times_{B} X\rightarrow X$ and $h^{-1}(R_{f})$
  is the union of two
  disjoint sections $S_1$ and $S_2$ whose normal bundles are
  $\mathcal{N}_{R_{f}/X}$ and $\mathcal{N}_{R_{f}/X}^{\vee}$.
  Hence  $R_f \times_{B} X\simeq \mathbb{P}(\mathcal{O}\oplus
  \mathcal{N}_{R_{f}/X})$.
  Let $C'$ be a component of $h^{-1}(C)$, since $\mathbb{P}(\mathcal{O}\oplus
  \mathcal{N}_{R_{f}/X})$
  contains no curve with
  negative self-intersection we have $C'^{2}=0$.
  Moreover $C'\ne S_{1},S_{2}$ because $C\ne R_{f}$. By Remark
  \ref{remark:torsion}
  $C'$ is a degree $m$ cover of $R_{f}$ and, since the degree of $h$ is two,
  the degree of the restriction of $\pi$ to $C=h(C')\subset X$ is either $m$
  or $2m$. In both cases $m$ divides $C\cdot F$.

  We are going to show now that $X$ does not admit a nontrivial self-map of
  degree
  $m$. Indeed,
  let $g:X\rightarrow X$ be such a map.
  Pulling back the divisor
  $R_{f}$ we get
  \begin{equation}
    \label{equation:epullback}
    g^{*}(R_{f})=aR_{f}+\sum b_{i}C_{i}
  \end{equation}
  where $b_{i}>0$ and $C_{i}^{2}=0$ for all $i$.
  By Remark \ref{remark:negativecurves} the curve $g(R_f)$ has
  self-intersection
  zero, moreover since $g$ induces an automorphism on the base $B$, we have
  $g_*(R_f)\cdot F=2<m$. We deduce, by the above
  claim on curves of self-intersection
  zero that
  $g(R_{f})=R_{f}$. Therefore $a$ is strictly positive too.
  Intersecting both members of equation \eqref{equation:epullback}
  with $F$ we get
  \begin{equation*}
    2m=2a+bm\ .
  \end{equation*}
  Since $a>0$ either $b=1$ and $2a=m$ or $b=0$ and $a=m$. In both cases
  $a\ge 2$
  because $m>2$.
  Let $\hat{g}:R_{f}\rightarrow R_{f}$ be the restriction of $g$ and denote
  by $i:R_{f}\rightarrow X$ the closed embedding. Then
  $$\hat{g}^{*}(\mathcal{N}_{R_{f}/S}))=i^{*}
  g^{*}(\mathcal{O}(R_{f}))=i^{*}(\mathcal{O}(aR_{f}))=
  \mathcal{N}_{R_{f}/S}^{a}\ .$$
  This is absurd because $\hat{g}$ is an automorphism, $\mathcal{N}_{R_{f}/S}$
  is a torsion line bundle of order $m$ and
  $\mathcal{N}_{R_{f}/S}^{a}$ is a torsion line bundle of order $m/a<m$.
  This proves the 'only if' part of the statement, the 'if' part follows
  directly from Proposition \ref{proposition:projciclic}.
\end{proof}

It is not immediately clear whether or not surfaces satisfying the
characterization of Theorem \ref{theorem:projectivebundles} do exist.
In order to provide examples of such surfaces, we are going to reformulate
Theorem \ref{theorem:projectivebundles} in terms of \'etale quotients of
trivial
projective bundles, this is the content of Corollary \ref{corollary:togroups}.
This will lead us to explicit examples, see
Examples \ref{examples:examplescase2}.

We start with the following Remark in witch among other things we will fix
notations needed later on.

\begin{remark}
  Let $C$ be a smooth projective curve of genus
  $g(B)>1$.
  Let $h:\mathbb{P}^{1}\times C \rightarrow
  \mathbb{P}^{1}\times C$ a nontrivial self-map,
  since
  $g(C)>1$ we have
  $h=h_{1}\times h_{2}$ where $h_{1}:\mathbb{P}^{1}\rightarrow \mathbb{P}^{1}$
  is a nontrivial self-map and
  $h_{2}: C\rightarrow C$
  is an automorphism.

  Let $G$ be a cyclic group  of automorphisms of $C$
  that acts on $\mathbb{P}^{1}\times C$.
  For every
  $g\in G$ we denote by $\Phi_{g}:\mathbb{P}^{1}\times C\rightarrow
  \mathbb{P}^{1}\times C$ the induced automorphism. In particular
  $\Phi_{g}=\phi_g \times g$ where $\phi_g$ is an
  automorphisms of
  $\mathbb{P}^{1}$, it follows that $G$
  acts on $\mathbb{P}^{1}$ too and the action on
  $\mathbb{P}^{1}\times C$ is the induced
  diagonal action.
  Moreover if the order of $G$ is $m$, there exists an affine coordinate $z$
  on $\mathbb{P}^{1}$ such that $\phi_g(z)=\epsilon z$ where $\epsilon$
  is a $m$-th root of the unity.
  \label{remark:action}
\end{remark}

\begin{corollary}
  Let $X$ be a nontrivial $\mathbb{P}^1$-bundle over a smooth projective
  curve $B$ of genus
  $g(B)>1$. The surface $X$ admits nontrivial self-maps
  of any given degree if and only if there exist a prime
  number $p$, a curve $C$ and $G$ a group of automorphisms of
  $C$ acting on $\mathbb{P}^1$ such that
  \begin{itemize}
    \item[(i)] $G$ is cyclic of order $p$ ,
      it acts freely on $C$, faithfully on $\mathbb{P}^1$ and
      $X$ is a Galois
      \'etale quotient $X\simeq (\mathbb{P}^1\times C)/G$.
    \item[(ii)] For every non zero $\delta \in \mathbb{Z}/p\mathbb{Z}$ there
      exists an
      automorphism $\varphi \in \mathrm{Aut}(C)$
      such that for every $g\in G$ either
      $\varphi \circ g=g^{\delta}\circ \varphi$
      or
      $\varphi \circ g= g^{-\delta}\circ \varphi$.
  \end{itemize}
  \label{corollary:togroups}
\end{corollary}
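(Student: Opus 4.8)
The plan is to translate Theorem \ref{theorem:projectivebundles}, which characterizes projective bundles $X = \mathbb{P}(\mathcal{O}\oplus\mathcal{L})$ admitting self-maps of every degree in terms of the torsion line bundle $\mathcal{L}$ and its behavior under $\mathrm{Aut}(B)$, into the equivalent language of étale $G$-quotients of a trivial bundle $\mathbb{P}^1\times C$.

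Let me set up the dictionary between these two descriptions.

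**Direction 1:** If $X = \mathbb{P}(\mathcal{O}\oplus\mathcal{L})$ with $\mathcal{L}$ of prime order $p$ and satisfying condition (ii) of the theorem (for each $m\in(\mathbb{Z}/p)^*$ there's $\varphi$ with $\varphi^*\mathcal{L}\simeq\mathcal{L}^{\pm m}$), I want to construct $C$, $G$, and the action. The natural candidate for $C$ is the étale cyclic cover $\widetilde{B}\to B$ of degree $p$ determined by $\mathcal{L}$ (Remark \ref{remark:torsion}). The deck group $G\cong\mathbb{Z}/p$ acts on $\widetilde{B}$, and I need to show $\mathbb{P}^1\times\widetilde{B}\to X$ is the quotient map.

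**Direction 2:** Conversely, given $(\mathbb{P}^1\times C)/G = X$ with $G$ cyclic of prime order $p$ acting freely on $C$ and faithfully on $\mathbb{P}^1$, I recover $\mathcal{L}$.

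Let me verify the key correspondence carefully, then state how the conditions match up.

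---

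**The plan is to** exhibit a precise dictionary translating the characterization of Theorem \ref{theorem:projectivebundles} (phrased via a torsion line bundle $\mathcal{L}$ and its orbit under $\varphi\in\mathrm{Aut}(B)$) into the group-theoretic language of \'etale quotients $(\mathbb{P}^1\times C)/G$. Since $X$ is a nontrivial $\mathbb{P}^1$-bundle admitting self-maps of every degree, Theorem \ref{theorem:projectivebundles} already forces $X\simeq\mathbb{P}(\mathcal{O}\oplus\mathcal{L})$ with $\mathcal{L}$ a torsion line bundle of prime order $p$ satisfying condition (ii). So both implications reduce to showing that the data $(\mathcal{L},p)$ and the data $(C,G)$ encode each other.

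First I would set up the correspondence on the level of covers. Given $\mathcal{L}$ of order $p$, let $j:\widetilde{B}\to B$ be the associated \'etale cyclic cover of degree $p$ from Remark \ref{remark:torsion}; I take $C=\widetilde{B}$ and let $G\cong\mathbb{Z}/p\mathbb{Z}$ be its deck group, which acts freely on $C$. The pullback $\mathbb{P}(\mathcal{O}\oplus j^*\mathcal{L})$ is trivial because $j^*\mathcal{L}\simeq\mathcal{O}$ (the cover kills $\mathcal{L}$), giving $\mathbb{P}^1\times C\to X$, and I would check this is exactly the quotient by the diagonal $G$-action, where $G$ acts on $\mathbb{P}^1$ through the gluing data of $\mathcal{L}$, i.e.\ by the character $g\mapsto\epsilon^{\,?}$ with $\epsilon$ a primitive $p$-th root of unity as in Remark \ref{remark:action}; faithfulness of the $\mathbb{P}^1$-action is equivalent to $\mathcal{L}$ having order exactly $p$ (not smaller). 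Conversely, starting from $(\mathbb{P}^1\times C)/G$ with $G$ cyclic of order $p$ acting freely on $C$ and faithfully (hence by Remark \ref{remark:action} diagonally, via $z\mapsto\epsilon z$) on $\mathbb{P}^1$, the two fixed points of the $G$-action on $\mathbb{P}^1$ descend to two disjoint sections $S_1,S_2$ of $X\to B$, so $X\simeq\mathbb{P}(\mathcal{O}\oplus\mathcal{L})$ by Remark \ref{remark:normals}; I recover $\mathcal{L}=\mathcal{N}_{S_1/X}$ as the line bundle whose associated cyclic cover is $C\to B$, of order exactly $p$ by faithfulness.

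The heart of the argument, and what I expect to be the main obstacle, is matching condition (ii) of Theorem \ref{theorem:projectivebundles} with condition (ii) of the Corollary. I would translate the statement ``$\varphi^*\mathcal{L}\simeq\mathcal{L}^{\pm m}$ for some $\varphi\in\mathrm{Aut}(B)$'' into a statement about how an automorphism of $C$ intertwines with the generator of $G$. The point is that an automorphism $\varphi$ of $B$ satisfying $\varphi^*\mathcal{L}\simeq\mathcal{L}^{\pm m}$ lifts to an automorphism $\widetilde{\varphi}$ of the cyclic cover $C=\widetilde{B}$ precisely because $\varphi$ preserves (up to the $\pm m$-power) the torsion class defining the cover, and under such a lift the deck transformation $g$ is conjugated to $g^{\pm m}$: schematically $\widetilde{\varphi}\circ g=g^{\pm m}\circ\widetilde{\varphi}$. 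Writing $\delta$ for $m$ (both range over $(\mathbb{Z}/p\mathbb{Z})^*$), the existence of $\varphi$ realizing every residue $\pm m$ becomes exactly the existence of $\varphi\in\mathrm{Aut}(C)$ with $\varphi\circ g=g^{\pm\delta}\circ\varphi$ for every $\delta$. The care needed here is to verify that the lift $\widetilde{\varphi}$ exists and is unique up to composition with $G$, and that the sign and the exponent transform correctly under the identification $\mathcal{L}\leftrightarrow(C\to B)$; I would lean on the functoriality of the cyclic-cover construction of Remark \ref{remark:torsion} and on the fact that $\varphi^*\mathcal{L}\simeq\mathcal{L}^m$ is equivalent to $\varphi$ pulling the $\mathbb{Z}/p\mathbb{Z}$-cover back to its $m$-th ``twist,'' which is the same cover with deck action precomposed by the $m$-th power map. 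Once this intertwining is established in both directions, conditions (ii) correspond verbatim and the Corollary follows.
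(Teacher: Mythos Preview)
Your proposal is correct and follows the same overall strategy as the paper: reduce to Theorem~\ref{theorem:projectivebundles}, realize $X$ as the $G$-quotient of $\mathbb{P}^1\times C$ where $C=\widetilde{B}$ is the cyclic cover attached to $\mathcal{L}$, and then match condition~(ii) of the Theorem with condition~(ii) of the Corollary via a lifting/intertwining argument. The set-up of the cover, the diagonality of the $G$-action, and the recovery of $\mathcal{L}$ from the two $G$-fixed points on $\mathbb{P}^1$ are handled identically.

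The one place where your route diverges from the paper is in how the intertwining exponent is pinned down. You propose to lift $\varphi\in\mathrm{Aut}(B)$ directly to $\widetilde{\varphi}\in\mathrm{Aut}(C)$ by functoriality of the cyclic-cover construction and then read off $\widetilde{\varphi}\circ g=g^{\pm m}\circ\widetilde{\varphi}$ from the identification of $\varphi^*\mathcal{L}$ with $\mathcal{L}^{\pm m}$. The paper instead lifts the \emph{entire} self-map $f:X\to X$ of degree $d$ to $\widetilde{f}=\psi\times\widetilde{f}_C$ on $\mathbb{P}^1\times C$ (via the universal property of the fibre product), uses the last clause of Proposition~\ref{proposition:projciclic} to force $\psi(z)=az^{\pm d}$, and then reads both relations $\delta\equiv\pm d$ and $\widetilde{f}_C\circ g=g^{\delta}\circ\widetilde{f}_C$ off the single descent equation $\widetilde{f}\circ\Phi_g=\Phi_{g^\delta}\circ\widetilde{f}$. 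Your approach is cleaner conceptually but, as you note, requires careful bookkeeping to be sure the exponent comes out as $\pm m$ rather than $\pm m^{-1}$; the paper's approach sidesteps that by letting the $\mathbb{P}^1$-component of the descent equation fix the exponent unambiguously. Either works.
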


\begin{proof}
  Let $X$ be a nontrivial $\mathbb{P}^1$-bundle over a curve $B$
  admitting nontrivial self-maps of any given degree.
  We are going to prove
  that there exists a prime $p$ such that (i) and (ii) hold.
  By Theorem \ref{theorem:classification}, $X\simeq
  \mathbb{P}(\mathcal{O}\oplus \mathcal{L})$ where $\mathcal{L}$ is a
  $p$-torsion line bundle on $B$, $p$ a suitable prime.
  Let $j:C \rightarrow B$ be the
  Galois \'etale cover of $B$ determined by $\mathcal{L}$, $G\subset
  \mathrm{Aut}(C)$
  its Galois group, $G$ is cyclic of order $p$.
  Let $\overline{j}: X\times_B C \rightarrow X$ be the
  induced Galois \'etale cover. Since $\mathcal{L}$ pulls
  back to a trivial line bundle on $C$,
  \begin{equation*}
    X\times_B C \simeq \mathbb{P}^1\times C
  \end{equation*}
  and $X$ is the quotient of $X\times_B C$ by a
  $G$-action.
  The action of $G$ on $\mathbb{P}^1\times C$ is diagonal (see Remark
  \ref{remark:action}),
  moreover $G$ acts freely on $C$,
  and faithfully on $\mathbb{P}^1$, otherwise $X$ would be isomorphic to
  $\mathbb{P}^1\times B$.

  Let $\delta\in (\mathbb{Z}/p \mathbb{Z})^*$ be the representative
  of a prime $d$ and let  $f:X\rightarrow X$
  be a nontrivial self-map of degree $d$.
  We claim that $f$ lifts to a degree $d$ endomorphism
  $\widetilde{f}:\mathbb{P}^1\times C \rightarrow \mathbb{P}^1\times C$.
  As a first step we are going to
  show that the induced automorphism $f_{B}$ lifts to an
  automorphism $\widetilde{f}_{C}:C\rightarrow C$.
  By Proposition \ref{proposition:projciclic},
  $f_{B}^{*}(\mathcal{L})\simeq \mathcal{L}^{\pm \delta}$,
  hence  $j^{*}\circ f_{B}^{*}(\mathcal{L})\simeq j^{*}
  (\mathcal{L}^{\pm \delta})\simeq
  \mathcal{O}_C$ and there exists an induced regular map of algebraic varieties
  $\gamma :\mathbb{C}\times C \rightarrow \mathbf{L}$, where $\mathbf{L}$
  denotes the total space of the line bundle $\mathcal{L}$.
  Denote moreover by $s: C\rightarrow
  \mathbb{C}\times C$ a non zero section of $\mathbb{C}\times
  C$
  and by $\mathrm{pr}_{\mathbf{L}} :\mathbf{L} \rightarrow B$ the bundle map.

  The image $\gamma \circ s(C)$ is a projective curve in the
  complement of the zero section in $\mathbf{L}$,
  by Remark \ref{remark:torsion} the restriction of
  $\mathrm{pr}_{\mathbf{L}}$ to such a curve
  gives an \'etale covering of $B$ isomorphic to the
  \'etale covering $j:C\rightarrow B$. Denote by $\sigma$ an
  isomorphism between these two \'etale covering of $B$,
  we then have
  \begin{equation*}
    f_B\circ j= \mathrm{pr}_{\mathbf{L}} \circ \gamma \circ s=
    j  \circ \sigma \circ \gamma \circ s
  \end{equation*}
  hence $f_{B}$ lifts to
  the automorphism
  $\widetilde{f}_{C}\in \mathrm{Aut}(C)$
  given by $\widetilde{f}_{C}:=\sigma \circ \gamma \circ s$.

  Finally we are going to show that
  the existence  of the lift $\widetilde{f}_{C}$ of
  $f_{B}$ implies the existence
  of a lift $\widetilde{f}:\mathbb{P}^{1}\times C\simeq X\times_B C\rightarrow
  \mathbb{P}^{1}\times C\simeq X\times_B C$ of $f$.
  Indeed, denoting by $\hat{\pi}:X\times_B C \rightarrow
  C$ the projection on the second factor,
  we have a commutative diagram
  \begin{equation*}
    \xymatrix{ & X\times_B C \ar@{-->}[rr]^{\widetilde f}
    \ar'[d][dd]_{\hat{\pi}}
    \ar[dl]_{\overline{j}}& &
    X\times_B C \ar[dl]^{\overline{j}} \ar[dd]^{\hat{\pi}}\ \\
    X \ar[rr]^(.6){f} \ar[dd]_{\pi}& & X \ar[dd]_(.7){\pi} & \\
    & C \ar[dl]_j \ar'[r][rr]^(.3){\widetilde{f}_{C}}&
    & C \ar[dl]^j\\
    B \ar[rr]^{f_{B}} & & B &
    }
  \end{equation*}
  where the dotted arrow $\widetilde{f}$ exists by the
  universal property of the fibre product $X\times_B C$
  since $\pi\circ f\circ \overline{j}=j \circ
  \widetilde{f}_C \circ \hat{\pi}$.
  By Remark \ref{remark:action}
  we have $\widetilde{f}=\psi \times \widetilde{f}_{C}$
  where $\psi :\mathbb{P}^{1}\rightarrow \mathbb{P}^{1}$
  is a degree $d$ nontrivial self-map of $\mathbb{P}^{1}$.
  By the last part of Proposition \ref{proposition:projciclic},
  we may also suppose that either $f^{*}(S_i)=d S_i$
  or $f^{*}(S_i)=d S_j$.
  Since sections $S_1$ and $S_2$, up to a change of cooordinates,
  lift to $\{0\}\times C$ and $\{\infty \}\times C$ on
  $\mathbb{P}^1\times C$, it follows that the endomorphism
  $\psi$ is given by either
  $z\mapsto az^d$ or $z\mapsto bz^{-d}$, $a,b\in \mathbb{C}^*$.
  Since $\psi \times \widetilde{f}_{C}$ descends to $X$,
  there exists a positive integer $\delta$ such that
  for every $g\in G$
  \begin{equation}
    \label{equation:descend}
    \begin{split}
      (\psi & \times \widetilde{f}_{C})\circ \Phi_{g}=
      (\psi \times \widetilde{f}_{B})\circ (\phi_{g}\times
      g) \\
      & = (\phi_{g}^\delta \times g^\delta) \circ (\psi \times
      \widetilde{f}_{C})
      =\Phi_{g}^\delta \circ (\psi \times \widetilde{f}_{C} )\ .
    \end{split}
  \end{equation}
  Recall that
  $g\in G$ nontrivial
  acts on $\mathbb{P}^{1}\times C$ by $\phi_{g}(z,b)=(\epsilon
  z,g(b))$ where $\epsilon$ is a nontrivial $p-$th root of the unity (Remark
  \ref{remark:action}).
  The first component of equality \eqref{equation:descend} gives either
  $a\epsilon^{d}z^{d}=a\epsilon^{\delta}z^{d}$ or
  $b/\epsilon^{d}z^{d}=b\epsilon^{\delta}/z^{d}$ hence
  $\delta \equiv \pm d\ \mathrm{mod}\ p$.
  The second component of equality \eqref{equation:descend} implies
  instead
  $\widetilde{f}_{C}\circ
  g =g^{\pm \delta}\circ\widetilde{f}_{C}$ and setting
  $\varphi :=\widetilde{f}_{C}$
  we get item (ii).

  To prove the remaining implication observe that
  straightforward computations show that the degree $p$ endomorphism of
  $\mathbb{P}^{1}\times C$ given by
  $(z,b)\mapsto(\frac{1+z^{p}}{z^{p-1}},b)$ always descend to the quotient
  $X$ and the same holds for the degree $d$ endomorphism, $p\nmid d$,
  $(z,b)\mapsto
  (z^{\pm d},\varphi(b))$ if $\varphi \circ g= g^{\pm\delta}\circ \varphi$.
\end{proof}

\begin{examples}
  Fix a prime number $p$,
  let $A$ be a finite group containing a cyclic group of order $p$,
  $\mathbb{Z}/ p \mathbb{Z}\subset A$.
  Let $N_{A}(\mathbb{Z}/ p \mathbb{Z})$ be the normalizer of
  $\mathbb{Z}/ p \mathbb{Z}$ in $A$.
  Letting   $\rho_{A}:
  N_{A}(\mathbb{Z}/ p \mathbb{Z})\rightarrow \rm{Aut}(\mathbb{Z}/ p
  \mathbb{Z})$ be the omomorphism
  obtained by conjugation, we denote by
  $\overline{\rho}_{A}:N_{A}(\mathbb{Z}/ p \mathbb{Z}) \oplus
  \mathbb{Z}/2\mathbb{Z}
  \rightarrow \rm{Aut}(\mathbb{Z}/ p \mathbb{Z})$ the omomorphism given by
  $(g,\alpha)\mapsto (-1)^{\alpha}\circ \rho_{A}(g)$.
  Item (ii) of Corollary \ref{corollary:togroups} can be restated asserting
  the surjectivity of the homomorphism
  $$\overline{\rho}_{\mathrm{Aut}(C)}:
  N_{\mathrm{Aut}(C)}(\mathbb{Z}/ p \mathbb{Z}) \oplus
  \mathbb{Z}/2\mathbb{Z}
  \rightarrow \mathrm{Aut}(\mathbb{Z}/ p \mathbb{Z})\ .$$

  To construct examples of surfaces that satisfies conditions in Theorem
  \ref{theorem:projectivebundles} or Corollary \ref{corollary:togroups}
  for any prime $p$ it will
  be enough to construct a smooth projective curve $C$ of genus
  $g(C)>1$ such that its automorphisms
  group contains the group $\mathbb{Z}/ p \mathbb{Z}$  as a subgroup acting
  without fixed points and such that
  $\overline{\rho}_{\mathrm{Aut}(C)}$ is surjective.

  By a classical result of Hurwitz every finite group $\widehat G$
  can be realized as an
  automorphism group of a projective curve $C$ of genus greater
  than or equal to two,
  \cite[Corollary 3.15, p.~15]{breuer:automorphisms_rs}. Moreover we can also
  realize $\widehat G$ as a
  subgroup of $\mathrm{Aut}(C)$ in such a way that it acts freely on $C$.
  Indeed such a $\widehat G$ fits in a short exact sequence
  \begin{equation*}
    0\rightarrow K \rightarrow \Gamma \rightarrow \widehat G \rightarrow0
  \end{equation*}
  where $\Gamma$ is a free Fuchsian group that acts freely on the upper
  half plane $\mathcal{U}$ and $C \simeq \mathcal{U}/K$. But now
  $C$ is an
  intermediate covering of the universal covering $\mathcal{U}\rightarrow
  \mathcal{U}/\Gamma\simeq C/\widehat G$ and hence $\widehat G$ must act
  freely.

  Finally, choosing $\widehat G$ such that $\mathbb{Z}/ p \mathbb{Z}$ and
  $\overline{\rho}_{\widehat G}$ is surjective
  (e.g. taking a semi-direct product
  $\widehat G=\mathbb{Z}/ p \mathbb{Z}\ltimes \rm{Aut}(\mathbb{Z}/
  p \mathbb{Z})$) and letting $C$
  be a curve such that $\widehat G$ acts freely on $C$,
  we get our examples.
  \label{examples:examplescase2}
\end{examples}

\section{$K(X)=-\infty$: $\mathbb{P}^1$-bundle over an elliptic curve}
In this section, as the title suggests, we will fix our attention on $
\mathbb{P}^1$-bundles over an elliptic curve, namely case (v) of Theorem
\ref{theorem:classification}. We keep notations introduced at the beginning of
the previous section, in particular $\mathcal{E}$ will denote a locally free
sheaf of rank two on an elliptic curve $E$.

\begin{remark}
  \label{remark:possiblebundles}
  According to the classification of vector bundles of rank two over an
  elliptic curve, we may, and will, assume that $X$ is isomorphic to
  $\mathbb{P}(\mathcal{E})$
  where $\mathcal{E}$ is one of the following locally free sheaves:
  \begin{itemize}
    \item[(i)] $\mathcal{E}=\mathcal{O}\oplus \mathcal{L}$ for a line bundle
      $\mathcal{L}$ over $E$;
    \item[(ii)] there is a nontrivial extension
      \begin{equation*}
	0\rightarrow \mathcal{O}\rightarrow \mathcal{E} \rightarrow
	\mathcal{O}\rightarrow 0;
      \end{equation*}
    \item[(iii)] there exists a point $q\in E$ and a nontrivial extension
      \begin{equation*}
	0\rightarrow \mathcal{O}\rightarrow \mathcal{E}\rightarrow
	\mathcal{O}(q)\rightarrow 0.
      \end{equation*}
  \end{itemize}
\end{remark}

We start by analyzing (ii)--(iii) of Remark \ref{remark:possiblebundles}.

\begin{proposition}
  \label{proposition:elliptic2nd}
  Suppose that
  \begin{equation} \label{equation:nontriext}
    0 \rightarrow \mathcal{O}\rightarrow \mathcal{E} \rightarrow
    \mathcal{O} \rightarrow 0
  \end{equation}
  is a nontrivial extension on the curve $E$. If
  $f:\mathbb{P}(\mathcal{E})\rightarrow \mathbb{P}(\mathcal{E})$ is a
  nontrivial
  self-map then $f$ is not an automorphism of $E$.
  It follows that there are an infinite number of primes that do not appear as
  degree of a nontrivial self-map of $\mathbb{P}(\mathcal{E})$.
\end{proposition}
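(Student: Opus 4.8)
The plan is to prove the contrapositive of the first assertion: if a self-map $f$ induced an automorphism $f_B\in\mathrm{Aut}(E)$ on the base, then $\deg f=1$, so $f$ would be an isomorphism. Granting this, the arithmetic conclusion is formal: every nontrivial self-map $f$ has $\deg(f_B)>1$, hence if $\deg f=p$ is prime then (from $\deg f=\deg(f_B)\deg(f_{\mathbb{P}^1})$) we must have $\deg(f_B)=p$, so that $p$ occurs as the degree of an endomorphism of the elliptic curve $E$; by Corollary \ref{corollary:abelianvarieties} infinitely many primes are thereby excluded. For the geometric heart, recall that for the nonsplit $\mathcal{E}$ of \eqref{equation:nontriext} the surface $X=\mathbb{P}(\mathcal{E})$ is a ruled surface over $E$ with invariant $e=0$ carrying a \emph{unique} section $C_0$ with $C_0^2=0$ (the one arising from the sub-sheaf $\mathcal{O}\hookrightarrow\mathcal{E}$), and $\mathcal{N}_{C_0/X}\simeq\mathcal{O}_E$. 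Assuming $f_B\in\mathrm{Aut}(E)$, the image $f(C_0)$ is again a section with $f(C_0)^2=0$ by Remark \ref{remark:negativecurves}, so uniqueness forces $f(C_0)=C_0$. Write $d=\deg f=\deg(f_{\mathbb{P}^1})$ and $\hat f=f|_{C_0}$, an automorphism of $C_0\simeq E$.

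Next I would pin down the pullback of $C_0$. Restricting $f^*\mathcal{O}_X(C_0)$ to $C_0$ gives $\hat f^*\mathcal{N}_{C_0/X}\simeq\hat f^*\mathcal{O}_E\simeq\mathcal{O}_E$, which combined with the numerical identity $f^*C_0\numeq dC_0$ (computed as in Remark \ref{remark:nsproj} from $f^*F\numeq F$ and $(f^*C_0)^2=0$) identifies the line bundle $f^*\mathcal{O}_X(C_0)\simeq\mathcal{O}_X(dC_0)$; the restriction to $C_0$ is exactly what kills the a priori $\mathrm{Pic}^0(E)$-ambiguity. The decisive input is the rigidity statement
\begin{equation*}
  h^0\big(X,\mathcal{O}_X(mC_0)\big)=1\qquad\text{for every }m\ge 0,
\end{equation*}
which holds because $\pi_*\mathcal{O}_X(mC_0)$ is the indecomposable Atiyah bundle $\mathrm{Sym}^m\mathcal{E}$ (up to a degree-zero twist) and has a one-dimensional space of sections. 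Since $f^*C_0$ is an effective divisor in the one-member system $|dC_0|$, we conclude $f^*C_0=dC_0$ as divisors: $f$ is totally ramified of order $d$ along $C_0$, and in particular $\mathrm{mult}_{C_0}R=d-1$ for the ramification divisor $R$.

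To close I would feed this into the linear ramification identity $R\lineq K_{X/B}-f^*K_{X/B}$ of Remark \ref{remark:nsproj}. Because $\det\mathcal{E}\simeq\mathcal{O}$ one has $K_{X/B}\simeq\mathcal{O}_X(-2C_0)$ up to a pullback from $E$, and $f^*\mathcal{O}_X(C_0)\simeq\mathcal{O}_X(dC_0)$, so $\mathcal{O}_X(R)\simeq\mathcal{O}_X((2d-2)C_0)$ once the residual $\mathrm{Pic}^0(E)$-factor is eliminated by restricting to $C_0$ exactly as above. Then $R':=R-(d-1)C_0$ is an effective member of the linear system $\big|(d-1)C_0\big|$ which, by construction, does not contain $C_0$; but rigidity forces $R'=(d-1)C_0$, which for $d\ge 2$ does contain $C_0$ with multiplicity $d-1>0$. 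This contradiction yields $d=1$, completing the contrapositive.

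The main obstacle is the rigidity $h^0(\mathcal{O}_X(mC_0))=1$ together with the careful line-bundle bookkeeping (clearing the $\mathrm{Pic}^0(E)$-ambiguities by restriction to $C_0$): this is precisely the point where the \emph{nontriviality} of the extension \eqref{equation:nontriext} is indispensable. Indeed, for the split bundle $\mathbb{P}(\mathcal{O}\oplus\mathcal{O})\simeq E\times\mathbb{P}^1$ the section $C_0$ moves, $h^0(\mathcal{O}_X(mC_0))=m+1$, and the maps $w\mapsto w^d$ give genuine degree-$d$ self-maps over $\mathrm{id}_E$; so the argument must, and does, use the indecomposability of $\mathcal{E}$ in an essential way. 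Everything else — the reduction to the base automorphism case, the identification of $C_0$, and the final passage to Corollary \ref{corollary:abelianvarieties} — is routine by comparison.
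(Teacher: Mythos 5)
Your argument is correct, and it reaches the contradiction by a different mechanism than the paper's. The paper also reduces to the ramification divisor of a map with $f_B\in\mathrm{Aut}(E)$, but its key lemma is that $S=C_0$ is the \emph{only} irreducible curve of zero self-intersection on $X$ dominating $E$ (not merely the only such section): any other such curve $S'$ is \'etale over $E$, and base-changing along $\pi|_{S'}$ would split the pullback of \eqref{equation:nontriext}, contradicting the injectivity of $\mathrm{H}^1(E,\mathcal{O})\to \mathrm{H}^1(S',\mathcal{O})$. Since $\mathrm{Supp}(R_f)$ consists of horizontal square-zero curves, this forces $R_f=mC_0$, and restricting to a fibre gives a degree-$d$ self-map of $\mathbb{P}^1$ whose ramification is supported at a single point --- impossible. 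You instead use only the uniqueness of the square-zero \emph{section} and replace the $\mathrm{H}^1$-injectivity input by the stronger rigidity $h^0(\mathcal{O}_X(mC_0))=1$ (Atiyah's $\mathrm{Sym}^m F_2\simeq F_{m+1}$ together with $h^0(F_{m+1}\otimes N)=0$ for nontrivial $N\in\mathrm{Pic}^0(E)$, needed to clear the twist), which pins down $f^*C_0=dC_0$ and $R=(2d-2)C_0$ as divisors and yields the contradiction from the exact multiplicity count $\mathrm{mult}_{C_0}R=d-1$. The trade-off: the paper's route is self-contained (the $\mathrm{H}^1$ injectivity follows from the trace splitting of a finite cover) and needs no information about $|mC_0|$ for $m\ge 2$, whereas yours imports Atiyah's classification but makes the subsequent divisor bookkeeping mechanical; note that your rigidity statement, applied with the $\mathrm{Pic}^0$-twists, actually implies the paper's uniqueness lemma, so you are invoking a strictly stronger (though standard) input. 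Your passage to the arithmetic conclusion via Corollary \ref{corollary:abelianvarieties} coincides with the paper's. Do supply references for the two facts you lean on (uniqueness of the degree-zero sub-line bundle of the nonsplit $\mathcal{E}$, and $h^0(F_{m+1}\otimes N)\le 1$), since they are exactly where the nontriviality of the extension enters.
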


\begin{proof}
  Denote by $S$ the section of $X$ associated to the inclusion of
  $\mathcal{O}$ in $\mathcal{E}$ given by
  \eqref{equation:nontriext}.
  In order to prove the Proposition it is enough to prove that $S$ is the only
  irreducible curve of zero self-intersection on $X$ that is not contained in a
  fibre of the projection to $E$.
  Indeed suppose that $f$ it is an automorphism of $E$,
  since $R_f^2=0$ and on $X$ there are no curves of negative 
  self-intersection, 
  the support of $R_f$ is union of irreducible curves of
  zero self-intersection 
  whose projection to $E$ is dominant.
  If $S$ is the only curve on $X$ of such type, $R_f=mS$ for a suitable integer
  $m$, but this implies that the restriction of $f$ to the fibres must be a
  nontrivial self-map
  of $\mathbb{P}^1$ whose ramification divisor is supported on only one
  point, a
  contradiction.

  Let us prove now that $S$ is the only irreducible 
  curve of zero self-intersection 
  on $X$ that dominates $E$.
  Suppose that $S^\prime$ in another such curve. Since
  $(K_{X/E}+S^\prime).S^\prime=0$, $S^\prime$ is smooth and
  $h:S^\prime\rightarrow E$
  the restriction of
  the projection $\pi: X\rightarrow E$ to $S^\prime$ is \'etale.
  After extending the base to $S^\prime$ we have
  \begin{equation*}
    S^\prime \times_E X=\mathbb{P}(h^*\mathcal{E}),
  \end{equation*}
  and $\mathbb{P}(h^*\mathcal{E})$ has two sections of self-intersection
  zero, namely the pullback of $S$ and $S^\prime$ by the second projection. It
  follows that $h^* \mathcal{E}$ spits, and the pull-back of the extension
  \eqref{equation:nontriext}
  \begin{equation*}
    0 \rightarrow \mathcal{O}_{S^\prime}\rightarrow h^*\mathcal{E} \rightarrow
    \mathcal{O}_{S^\prime} \rightarrow 0
  \end{equation*}
  is trivial. But the map induced by $h$ on $\mathrm{Ext}^1$
  \begin{equation*}
    \mathrm{Ext}^1_E( \mathcal{O}, \mathcal{O})\rightarrow
    \mathrm{Ext}^1_{S^\prime}(\mathcal{O},\mathcal{O})
  \end{equation*}
  corresponds in co-homology to
  \begin{equation*}
    \mathrm{H}^1(E,\mathcal{O}) \rightarrow \mathrm{H}^1(S^\prime,\mathcal{O})
  \end{equation*}
  which is injective, a contradiction.
\end{proof}

\begin{proposition}
  \label{proposition:elliptic3rd}
  Let
  \begin{equation} \label{equation:nontriext2}
    0\rightarrow \mathcal{O}\rightarrow \mathcal{E} \rightarrow
    \mathcal{O}(q)\rightarrow 0
  \end{equation}
  a nontrivial extension on the elliptic curve $E$. If
  $f:\mathbb{P}(\mathcal{E})\rightarrow \mathbb{P}(\mathcal{E})$ is a
  nontrivial self-map then $\deg f \ne 2$.
\end{proposition}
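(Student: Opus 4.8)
The plan is to show that each of the two possible shapes of a degree-two map is obstructed by a parity/integrality phenomenon rooted in the fact that $\deg\mathcal{E}=\deg\mathcal{O}(q)=1$ is odd. Set $X=\mathbb{P}(\mathcal{E})$ and recall from Remark \ref{remark:nsproj} that $\mathrm{NS}(X)=\mathbb{Z}H+\mathbb{Z}F$ with $F^2=0$, $H\cdot F=1$, and, since $K_{X/E}\numeq -2H-eF$ satisfies $K_{X/E}^2=0$ with $e=\deg\mathcal{E}=1$, also $H^2=-e=-1$. Suppose for contradiction that $\deg f=2$. As $\deg f=\deg(f_E)\cdot\deg(f_{\mathbb{P}^1})$ and $g(E)=1$, there are exactly two possibilities: either $f_E$ is an automorphism and $f$ has fibre degree $2$, or $f_E$ is a degree-two self-map of $E$ and $f$ is a fibrewise isomorphism.

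In the first case I would argue entirely inside $\mathrm{NS}(X)$. Since $f_E$ is an automorphism one has $f^{*}F=F$, and because $f^{*}$ multiplies the intersection pairing by $\deg f=2$, writing $f^{*}H=\alpha H+\beta F$ yields $\alpha=(f^{*}H)\cdot F=(f^{*}H)\cdot(f^{*}F)=\deg f=2$, while $(f^{*}H)^2=\deg f\cdot H^2$ forces $4H^2+4\beta=2H^2$, that is $\beta=-\tfrac12 H^2=\tfrac12$. This is not an integer, contradicting $f^{*}H\in\mathrm{NS}(X)$. The mechanism is precisely that $e=1$ is odd: the same computation with $e$ even produces an integral $\beta$, which is exactly why the torsion and degree-zero bundles treated earlier do admit such maps.

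In the second case I would use that a fibrewise isomorphism $f$ covering $f_E$ induces, by the universal property of the pullback bundle together with fibrewise bijectivity, an isomorphism of $\mathbb{P}^1$-bundles $X\xrightarrow{\ \sim\ }f_E^{*}X=\mathbb{P}(f_E^{*}\mathcal{E})$ over $E$. Hence $\mathbb{P}(f_E^{*}\mathcal{E})\cong\mathbb{P}(\mathcal{E})$, so $f_E^{*}\mathcal{E}\cong\mathcal{E}\otimes\mathcal{M}$ for some $\mathcal{M}\in\mathrm{Pic}(E)$. Comparing degrees, $\deg(f_E^{*}\mathcal{E})=\deg(f_E)\cdot\deg\mathcal{E}=2$, whereas $\deg(\mathcal{E}\otimes\mathcal{M})=\deg\mathcal{E}+2\deg\mathcal{M}=1+2\deg\mathcal{M}$ is odd; this parity clash is the contradiction.

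The two computations are short, and the only real friction I anticipate lies in the two structural inputs. First, that $f^{*}F=F$ on the nose (not merely up to multiplicity), which holds because $f_E$ is étale, so the preimage of a fibre is a single reduced fibre. Second, that a fibrewise isomorphism covering $f_E$ genuinely descends to an isomorphism $X\cong\mathbb{P}(f_E^{*}\mathcal{E})$ of projective bundles, rather than merely a fibrewise-bijective morphism; here one invokes that a morphism of smooth $\mathbb{P}^1$-bundles over $E$ that is an isomorphism on every fibre is an isomorphism. Granting these, both cases close by the parity of $\deg\mathcal{E}=1$, and therefore $\deg f\neq 2$.
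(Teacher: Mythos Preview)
Your argument is correct. In the first case ($f_E$ an automorphism) you run the same parity computation in $\mathrm{NS}(X)$ as the paper, only with $H$ in place of the section $S$; since $H^2=-1$ and $S^2=1$ are both odd, the obstruction is identical. In the second case ($\deg f_E=2$) the paper stays inside $\mathrm{NS}(X)$ and finds $(f^*S)^2=1+2b\neq 2$ exactly as before, whereas you instead pass to $\mathrm{Pic}(E)$: the fibrewise isomorphism forces $f_E^*\mathcal{E}\cong\mathcal{E}\otimes\mathcal{M}$, and comparing degrees gives the even/odd clash $2=1+2\deg\mathcal{M}$. This is a genuinely different route for the second case; it is arguably more conceptual (it makes transparent that the obstruction is the odd degree of $\mathcal{E}$, independent of any choice of basis for $\mathrm{NS}(X)$), while the paper's version has the virtue of treating both cases by a single uniform intersection calculation. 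The two structural inputs you flag are fine: $f^*F=F$ follows from $F=\pi^*(\mathrm{pt})$ and $\pi\circ f=f_E\circ\pi$ with $f_E$ an automorphism, and the factorisation $X\xrightarrow{\sim}f_E^*X$ is the standard one for a fibrewise-isomorphic morphism over a base map.
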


\begin{proof}
  Suppose $f$ is a nontrivial self-map of degree two.
  Let $S$ be the section of $X$ corresponding to the inclusion of
  $\mathcal{O}$ in $\mathcal{E}$ given by
  \eqref{equation:nontriext2} and as usual
  let $F$ be a fibre of the projection of $\pi:X\rightarrow E$.
  Since by hypothesis $f$ has degree two and $S^2=1$ we have
  \begin{equation}
    \label{equation:auto2}
    f^*S\cdot f^*S=2\ .
  \end{equation}
  Moreover by Projection Formula
  \begin{equation}
    \label{equation:pullback2}
    F\cdot f^*S=
    \begin{cases}
      2& \textrm{if}\ \deg(f_E)=1 \\
      1& \textrm{if}\ \deg(f_E)\ne 1
    \end{cases}
  \end{equation}
  then
  \begin{equation*}
    (f^*S)^2=
    \begin{cases}
      1+2b \\
      4+4b
    \end{cases}
  \end{equation*}
  for a suitable integer $b$. In either cases this leads to a contradiction in
  view of \eqref{equation:auto2}.
\end{proof}

Now we come to (i) of Remark \ref{remark:possiblebundles}. We will need the
following elementary Lemma.
If $\deg (\mathcal{L})>0$ then $X=\mathbb{P}(\mathcal{O} \oplus
\mathcal{L})$ has a unique curve of negative self-intersection and by Lemma
\ref{lemma:negativecurve} every nontrivial self-map of $X$ has degree a
square.
Hence we may suppose that $\deg(\mathcal{L})=0$.

\begin{lemma}
  \label{lemma:autofibers}
  Suppose $X=\mathbb{P}(\mathcal{O}\oplus \mathcal{L})$, where
  $\mathcal{L}$ is a line bundle of degree zero on $E$.
  \begin{itemize}
    \item[(i)] If $ \mathcal{L}$ is not a torsion line bundle, then there is no
      nontrivial self-map of $X$ that induces an automorphism on the curve
      $E$.
    \item[(ii)] There exists a nontrivial self-map
      $f:X\rightarrow X$ that induces isomorphisms on the fibres and a
      nontrivial
      self-map $\varphi$ on the base
      if and only if
      $\varphi^*\mathcal{L}=\mathcal{L}^{\pm 1}$.
  \end{itemize}
\end{lemma}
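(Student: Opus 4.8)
The plan is to prove Lemma \ref{lemma:autofibers} by exploiting the structure of $X = \mathbb{P}(\mathcal{O} \oplus \mathcal{L})$ with $\deg(\mathcal{L}) = 0$, using the two disjoint sections $S_1, S_2$ furnished by Remark \ref{remark:normals}. The key structural fact is that when $\deg(\mathcal{L}) = 0$ and $\mathcal{L}$ is nontrivial, $S_1$ and $S_2$ are the \emph{only} sections of zero self-intersection, while if $\mathcal{L}$ is non-torsion, Remark \ref{remark:torsion} tells us there are no \emph{other} irreducible curves of zero self-intersection dominating $E$ (since such a curve would force $\mathcal{L}$ to be torsion via the \'etale cover construction). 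This dichotomy drives both parts of the proof.

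For part (i), suppose toward a contradiction that $f : X \to X$ is a nontrivial self-map inducing an automorphism $\varphi = f_E$ on the base, with $\mathcal{L}$ non-torsion. By Remark \ref{remark:negativecurves}, $f$ must permute the set of irreducible curves of zero self-intersection that dominate $E$. Since $\mathcal{L}$ is non-torsion, the only such curves are $S_1$ and $S_2$ (by Remark \ref{remark:torsion}, any other would induce an \'etale cyclic cover of $E$ of finite degree, forcing $\mathcal{L}$ to be torsion). First I would argue that $f$ maps $\{S_1, S_2\}$ to itself. Then, mimicking the normal bundle computation of equation \eqref{equation:normal} in Proposition \ref{proposition:projciclic} (specialized to $\deg f_{\mathbb{P}^1}$), I would derive a relation of the form $\varphi^* \mathcal{L} \simeq \mathcal{L}^{\pm d}$ where $d = \deg(f_{\mathbb{P}^1}) \ge 2$. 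Since $\varphi$ is an automorphism of finite order $r$ in $\mathrm{Aut}(E)$, iterating gives $\mathcal{L} \simeq \mathcal{L}^{(\pm d)^r}$, so $\mathcal{L}^{(\pm d)^r - 1} \simeq \mathcal{O}$ with $(\pm d)^r - 1 \ne 0$, forcing $\mathcal{L}$ to be torsion — the desired contradiction.

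For part (ii), the strategy splits into the two implications. For the ``only if'' direction, given a nontrivial self-map $f$ inducing isomorphisms on the fibres (so $\deg(f_{\mathbb{P}^1}) = 1$) and a self-map $\varphi$ on the base, the same normal bundle analysis as above but now with $d = 1$ yields directly $f_E^* \mathcal{L} \simeq \mathcal{L}^{\pm 1}$; if $\varphi = f_E$ is a translation or automorphism, I would confirm $\varphi^* \mathcal{L} = \mathcal{L}^{\pm 1}$. For the ``if'' direction, given $\varphi$ with $\varphi^* \mathcal{L} \simeq \mathcal{L}^{\pm 1}$, I would construct $f$ explicitly: the isomorphism $\varphi^* \mathcal{O} \oplus \varphi^* \mathcal{L} \simeq \mathcal{O} \oplus \mathcal{L}^{\pm 1}$ induces an isomorphism $\mathbb{P}(\varphi^*(\mathcal{O} \oplus \mathcal{L})) \simeq \mathbb{P}(\mathcal{O} \oplus \mathcal{L})$ (using $\mathbb{P}(\mathcal{O} \oplus \mathcal{L}^{-1}) \simeq \mathbb{P}(\mathcal{O} \oplus \mathcal{L})$), and composing with the canonical map $X \times_\varphi E \to X$ gives a fibrewise-isomorphic self-map over $\varphi$, exactly as in the construction of $\phi = \phi_1 \circ \phi_2$ at the end of Proposition \ref{proposition:projciclic}.

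The main obstacle I anticipate is part (i): cleanly establishing that $f$ \emph{must} send $\{S_1, S_2\}$ to itself when $\mathcal{L}$ is non-torsion, which rests on correctly invoking Remark \ref{remark:torsion} to rule out other zero-self-intersection curves. The subtlety is that Remark \ref{remark:torsion} is phrased for torsion $\mathcal{L}$, so I would need to observe its contrapositive: the existence of any irreducible curve $D \ne S_1, S_2$ dominating $E$ with $D^2 = 0$ produces, via the \'etale cover it determines, a finite-order relation on $\mathcal{L}$, contradicting non-torsion. Once the sections are pinned down, the normal bundle bookkeeping and the finite-order argument on $\mathrm{Aut}(E)$ are routine.
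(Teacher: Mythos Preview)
Your approach is essentially the paper's: show that $S_1,S_2$ are the only irreducible zero-self-intersection curves dominating $E$ when $\mathcal{L}$ is non-torsion, conclude $f(\{S_1,S_2\})\subseteq\{S_1,S_2\}$, run the normal-bundle computation to obtain $f_E^*\mathcal{L}\simeq\mathcal{L}^{\pm d}$, and for (ii) build $f$ via the fibre product $X\times_\varphi E\to X$. The paper differs only cosmetically, routing through the ramification divisor $R_f$ (whose support must lie in $S_1\cup S_2$) to reach the same conclusion about where $f$ sends the $S_i$.

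One point to tighten in part (i): you assert that $\varphi=f_E$ has finite order in $\mathrm{Aut}(E)$, but an automorphism of an elliptic curve may be translation by a non-torsion point and hence of infinite order, so the iteration step as written can fail. What actually saves the argument is that translations act trivially on $\mathrm{Pic}^0(E)$, so the action of $\varphi^*$ on degree-zero line bundles factors through the finite group $\mathbf{Aut}(E)$ of group automorphisms; iterating \emph{that} yields $\mathcal{L}^{(\pm d)^r-1}\simeq\mathcal{O}$ and the torsion conclusion. The paper is equally laconic here (it just writes ``since $\deg(f)\ge 2$, $\mathcal{L}$ is a torsion bundle''), so you are not missing anything the paper supplies---just phrase the finite-order claim for $\varphi^*$ on $\mathrm{Pic}^0(E)$ rather than for $\varphi$ itself.
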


\begin{proof}
  On $X$ there are only two
  sections of zero self-intersection, and we denote them by $S_1$ and $S_2$,
  and keep notations introduced in Remark \ref{remark:normals}.
  If
  $C$ is another irreducible curve on $X$ that dominates $E$ and such that
  $C^2=0$ and $C\ne S_1,S_2$ then since the numerical class of $C$ must be a
  multiple of the numerical class of $S_1$ we have $C\cdot S_i=0$ and $C$ is
  disjoint from $S_1,S_2$.
  Moreover the restriction of the projection $\pi :
  \mathbb{P}(\mathcal{ \mathcal{O}\oplus \mathcal{L}})\rightarrow E$,
  $\pi_{|C}:C \rightarrow E$ is \'etale.
  Extending the base to $C$ we obtain three sections of zero self-intersection,
  and then a trivial bundle.
  But then
  \begin{equation*}
    \pi_{|C}^* \mathcal{L}= (\pi_{|C}^* \circ s_1^*) \mathcal{O}(S_1) =
    \mathcal{O}_{C}(S_1)= \mathcal{O}_C
  \end{equation*}
  and $\mathcal{L}$ must be a torsion bundle. Suppose then that $S_1$ and
  $S_2$ are the only irreducible curves of zero self-intersection
  on $X$ different from a fibre. Since on $X$ there are no curves of negative
  self-intersection,
  if a nontrivial self-map $f$ induces an automorphism on the base
  the ramification divisor is such that
  $R_f=\deg(f)S_1+\deg(f)S_2$. We have that $f(S_i)=S_j$, and then arguing
  as in the proof of Proposition \ref{proposition:projciclic},
  \begin{equation*}
    f^*_E \mathcal{N}_{S_j/X}=f_E^* s_j^* \mathcal{O}(S_j)=s_i^*f^*
    \mathcal{O}(S_j)=s_i^* \mathcal{O}\big ( (\deg(f) S_i \big )=
    (\mathcal{N}_{S_i/X})^{\deg(f)}.
  \end{equation*}
  Since $\mathcal{N}_{S_1/X}=\mathcal{N}_{S_2/X}^{\vee}=\mathcal{L}$ the above
  equalities implies that $f^* \mathcal{L}= \mathcal{L}^{\pm deg(f)}$,
  and then,
  since $\deg(f)\ge 2$, $ \mathcal{L}$ is a torsion bundle.

  We are going to prove item (ii) now. We may suppose that $\mathcal{L}$ is not
  a trivial sheaf.
  Since $f$
  restricts to an automorphism on the fibres, $f^*S_i\cdot F=1$ and
  $f^{-1}(S_i)$ is a section of zero self-intersection, $i=1,2$, hence
  $f(S_i)=S_j$. Again, it follows that
  either $f_E^*\mathcal{L}=\mathcal{L}$
  or $f_E^*\mathcal{L}=\mathcal{L}^\vee$.
  This concludes the proof of the only if
  part of our statement.
  For the if part, observe that the first projection
  \begin{equation*}
    X\times_{\varphi}E\rightarrow X
  \end{equation*}
  is a finite surjective morphism and since
  $ X\times_{\varphi}E\simeq X$ as projective bundles it
  induces a nontrivial self-map of $X$ with the desired properties.
\end{proof}

\begin{remark}
  As a consequence of
  Lemma \ref{lemma:autofibers}, if $\deg(\mathcal{L})=0$ and $\mathcal{L}$ is
  not a torsion bundle, 
  every nontrivial self-map
  of $\mathbb{P}(\mathcal{O} \oplus \mathcal{L})$ induce an isomorphism
  on fibres, and the only possible degrees of nontrivial self-maps of $X$ are
  degrees of nontrivial self-maps of $E$.
  In view of Corollary \ref{corollary:abelianvarieties} we will then assume
  through the rest of this paper that $X$ is as in case (i) of Remark
  \ref{remark:possiblebundles} and $\mathcal{L}$ is a $k$-torsion line bundle.

  If $k=1,2,3$, every prime is either a multiple of $k$ or is congruent to
  $\pm 1$ modulo $k$, and by Proposition \ref{proposition:projciclic} (ii)
  there exists a nontrivial self-map of any given degree of $X$. If $k>3$ the
  existence of nontrivial self-maps of any given degree depends on the geometry
  of $E$ and $\mathcal{L}$.
  \label{remark:elliptic123}
\end{remark}

\begin{notations}
  We will denote by $E_i=\mathbb{C}/(\mathbb{C}\oplus \mathbb{C}i)$ and
  $E_\rho=\mathbb{C}/(\mathbb{C}\oplus \mathbb{C}\rho)$ where $\rho^3=1$,
  $\rho\ne 1$. We have $$\mathbf{Aut}(E_i)=\{1,-1,i,-i\}\simeq \mathbb{Z}_4$$
  and $$\mathbf{Aut}(E_\rho)=\{\frac{1}{2}+ \frac{\sqrt{3}}{2}i, -\frac{1}{2}+
  \frac{\sqrt{3}}{2}i, -\frac{1}{2}- \frac{\sqrt{3}}{2}i, \frac{1}{2}-
  \frac{\sqrt{3}}{2}i, -1,1\}\simeq \mathbb{Z}_6$$ moreover if $E\ne
  E_i,E_\rho$
  then $$\mathbf{Aut}(E)=\{1,-1\}\simeq \mathbb{Z}_2$$
  where $\mathbf{Aut}(E)$ denotes the automorphism group for the abelian
  variety
  structure of an elliptic curve $E$.
\end{notations}

\begin{remark}
  We are going to collect in this remark, for reader's convenience, some
  elementary facts on the ring of endomorphisms of an elliptic curve.
  Let $E$ be an elliptic curve and $\mathrm{End}(E)$ its ring of endomorphisms.
  If
  $E$ does not have complex multiplication then $\mathrm{End}(E)$ is a free
  $\mathbb{Z}$-module of rank one, every endomorphisms is given by
  multiplication by $n$, a given integer,
  and the only possible degrees are squares.

  If $E$ has complex multiplication then $\mathrm{End}(E)$ is a rank two free
  $\mathbb{Z}$-moudule and
  $\mathrm{End}_{\mathbb{Q}}(E)=\mathrm{End}(E)\otimes_{\mathbb{Z}}\mathbb{Q}$
  is a complex algebraic extension of $\mathbb{Q}$ and
  $[\mathrm{End}_{\mathbb{Q}}(E):\mathbb{Q}]=2$. Moreover
  $\mathrm{End}(E)\subseteq \mathcal{O}_{\mathrm{End}_{\mathbb{Q}}(E)}$,
  i.e. every
  endomorphism of $E$ is an algebraic integer in
  $\mathrm{End}_{\mathbb{Q}}(E)$,
  and
  \begin{equation*}
    \deg (\alpha)=
    \mathrm{N}_{\mathrm{End}_{\mathbb{Q}}(E)/\mathbb{Q}}(\alpha)=
    \alpha \cdot \bar \alpha,\quad \forall \alpha \in \mathrm{End}(E)\ .
  \end{equation*}
  \label{remark:endoelliptic}
  In particular, if a prime number $p$ which is not ramified in
  $\mathrm{End}_{\mathbb{Q}}(E)$ 
  is the degree of an endomorphism of $E$, 
  it splits completely in $\mathrm{End}_{\mathbb{Q}}(E)$.
\end{remark}

\begin{lemma} \label{lemma:phireduction}
  Suppose $E\ne E_i,E_\rho$ and $X=\mathbb{P}(\mathcal{O}_E\oplus \mathcal{L})$
  with $\mathcal{L}$ of $k$-torsion. If
  $X$ has a nontrivial self-map of
  any given degree then $\varphi(k)<4$, where $\varphi$ denotes the Euler's
  totient function.
\end{lemma}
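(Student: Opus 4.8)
The plan is to stratify the nontrivial self-maps $f$ of $X$ of prime degree $p$ according to the induced endomorphism $f_E$ of the base. Since $\deg(f)=\deg(f_E)\cdot\deg(f_{\mathbb{P}^1})$ and $p$ is prime, either $f_E$ is an automorphism of $E$ or $f_E$ is an isogeny of degree $p$. In the first case I would invoke Proposition \ref{proposition:projciclic}: the existence of such an $f$ forces either $k\mid p$, or the existence of $\varphi\in\mathrm{Aut}(E)$ with $\varphi^*\mathcal{L}\simeq\mathcal{L}^m$ and $p\equiv\pm m\pmod k$. Since $E\neq E_i,E_\rho$ we have $\mathrm{Aut}(E)=\{\pm1\}$ for the group structure, and translations act trivially on $\mathrm{Pic}^0(E)$; hence $\varphi^*\mathcal{L}$ is either $\mathcal{L}$ or $\mathcal{L}^{-1}$, so $m\equiv\pm1$ and $p\equiv 0,\pm1\pmod k$. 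In the second case Remark \ref{remark:endoelliptic} shows that a prime can be the degree of an endomorphism only when $E$ has complex multiplication, and then (for $p$ unramified) $p$ splits completely in the imaginary quadratic field $K=\mathrm{End}_{\mathbb{Q}}(E)$. Thus, up to the finitely many primes dividing $k$ or ramifying in $K$, every prime that is a degree of a self-map of $X$ satisfies $p\equiv\pm1\pmod k$ or splits in $K$.

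Now suppose, for contradiction, that $X$ admits self-maps of every prime degree while $\varphi(k)\ge 4$. If $E$ has no complex multiplication the isogeny case never occurs, so every prime lies cofinitely in the classes $\pm1\bmod k$; since $\varphi(k)\ge 4>2$ there is a unit $u\not\equiv\pm1\pmod k$, and by Dirichlet infinitely many primes are congruent to $u$, none of which is then achievable — a contradiction. If $E$ has complex multiplication, write $U=(\mathbb{Z}/k\mathbb{Z})^*\smallsetminus\{\pm1\}$, a set of $\varphi(k)-2$ classes. Every prime lying in a $U$-class must, by the first paragraph, split in $K$. Passing to $N=\mathrm{lcm}(k,|\Delta_K|)$, where $\Delta_K$ is the discriminant, splitting in $K$ is detected by the Kronecker character $\chi_K$ (of conductor dividing $N$), while the $U$-classes are detected by the natural surjection $\theta\colon(\mathbb{Z}/N\mathbb{Z})^*\to(\mathbb{Z}/k\mathbb{Z})^*$. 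Since every Frobenius class mod $N$ contains infinitely many primes, the inclusion of these sets of primes translates into the inclusion of subsets $\theta^{-1}(U)\subseteq\ker\chi_K$ of $(\mathbb{Z}/N\mathbb{Z})^*$. Comparing densities yields $\tfrac{\varphi(k)-2}{\varphi(k)}\le\tfrac12$, hence $\varphi(k)\le 4$, so necessarily $\varphi(k)=4$.

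The main obstacle I anticipate is precisely this boundary case $\varphi(k)=4$, where a density estimate no longer suffices and a structural argument is needed. Here $\theta^{-1}(U)$ and $\ker\chi_K$ have the same cardinality in the finite group $(\mathbb{Z}/N\mathbb{Z})^*$, so the inclusion above forces $\theta^{-1}(U)=\ker\chi_K$; pushing forward along the surjection $\theta$ then exhibits $U=\theta(\ker\chi_K)$ as the image of a subgroup, hence as a subgroup of $(\mathbb{Z}/k\mathbb{Z})^*$. But any subgroup contains the identity $1$, whereas $1\notin U$ by definition — the desired contradiction. This excludes $\varphi(k)=4$, and together with the bound $\varphi(k)\le 4$ and the non-CM discussion we conclude $\varphi(k)\le 2$, i.e. $\varphi(k)<4$. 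The only analytic inputs are Dirichlet's theorem on primes in arithmetic progressions and the fact that the split primes of $K$ have density $\tfrac12$; all geometric content is packaged into Proposition \ref{proposition:projciclic} and Remark \ref{remark:endoelliptic}.
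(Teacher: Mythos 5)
Your proof is correct, and its skeleton coincides with the paper's: stratify prime-degree self-maps by whether $f_E$ is an automorphism (where Proposition \ref{proposition:projciclic}, together with the fact that $\mathrm{Aut}(E)$ is generated by $\mathbf{Aut}(E)=\{\pm1\}$ and translations acting trivially on $\mathrm{Pic}^0(E)$, forces $p\equiv 0,\pm1\pmod k$) or an isogeny of degree $p$ (where Remark \ref{remark:endoelliptic} forces $p$ to split in $K=\mathrm{End}_{\mathbb{Q}}(E)$), and then compare densities to conclude $\varphi(k)\le 4$. Where you genuinely diverge is the boundary case $\varphi(k)=4$, which is the real content of the lemma. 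The paper rules it out by exhibiting a positive-density intersection $\mathfrak{P}_1\cap\mathfrak{P}_2'$: an explicit computation with Legendre symbols, quadratic reciprocity and Euler's criterion shows that every prime $p\equiv 1\pmod{8ak}$ is both $\equiv 1\pmod k$ and split in $K$, so the union $\mathfrak{P}_1\cup\mathfrak{P}_2'$ cannot have density one. You instead transport everything into the finite group $(\mathbb{Z}/N\mathbb{Z})^*$: Dirichlet for the modulus $N$ turns the hypothesis into the set-theoretic inclusion $\theta^{-1}(U)\subseteq\ker\chi_K$, the cardinality count at $\varphi(k)=4$ upgrades it to an equality, and then $1\in\ker\chi_K=\theta^{-1}(U)$ gives $1=\theta(1)\in U$, absurd since $U=(\mathbb{Z}/k\mathbb{Z})^*\setminus\{\pm1\}$. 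Your finish is cleaner and computation-free, and it also subsumes the case $\varphi(k)>4$ in the same inequality; the price is that you import wholesale the standard fact that splitting in $K$ is detected by the Kronecker character of conductor $|\Delta_K|$, which is exactly where the paper's quadratic-reciprocity computation lives. Both arguments ultimately use the same analytic input (Dirichlet/Chebotarev for the modulus $N$, respectively $8ak$), so the difference is one of packaging rather than substance, but your treatment of the critical case is the more economical of the two.
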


\begin{proof}
  Since we suppose $\mathbf{Aut}(E)\cong \mathbb{Z}_2$, if $f:X\rightarrow
  X$ is a nontrivial self-map of prime degree $p=\deg(f)$ and $f_E\in
  \mathbf{Aut}(E)$ then by Proposition \ref{proposition:projciclic} either $p$
  is congruent to $\pm 1$ modulo $k$ or $p=k$.	Denote by $\mathfrak{P}_1$ the
  set of primes obtained in the above way, and denote by $\mathfrak{P}_2$
  the set
  of primes $p$ such that there exists a nontrivial self-map $f:X\rightarrow
  X$ with $\deg(f_E)=p$.
  By Dirichlet Theorem the analytic density of $\mathfrak{P}_1$ is less
  than or equal to $2/\varphi (k)\le 1/2$.
  In what follows we may suppose that $E$ has complex multiplication, otherwise
  the set $\mathfrak{P}_1\cup \mathfrak{P}_2$ would have analytic density less
  than or equal to $1/2$ and our claim would be clearly true.
  The set $\mathfrak{P}_2$ is then contained,
  up to a finite set containing ramified
  primes (recall Remark \ref{remark:endoelliptic} above),
  in the set of primes that
  split completely in the complex quadratic number field
  $\mathrm{End}_{\mathbb{Q}}(E)$, denote this set by $\mathfrak{P}_2^\prime$.
  It follows by Chebotarev Density Theorem
  that the analytic density of $\mathfrak{P}_2^\prime$ is less than or equal
  to $1/2$.
  If moreover $\varphi(k)>4$ the analytic density of the set $\mathfrak{P}_1$
  is
  strictly less than $2/5$ and it follows that
  $(\mathfrak{P}_1\cup \mathfrak{P}_2)^c$ contains an infinite number
  of primes.

  Suppose then that $\varphi(k)=4$ and
  that $\mathfrak{P}_1\cup \mathfrak{P}_2$ is equal to the set of
  primes, then $\mathfrak{P}_1\cup \mathfrak{P}_2^\prime$ has analytic density
  one. We are going to show that $\mathfrak{P}_1\cap \mathfrak{P}_2^\prime$
  has non zero analytic density, and this will lead us to a contradiction.
  We may suppose that the lattice of $E$ is contained
  in $\mathbb{Q}(i \sqrt{a})$ where $a$ is a positive square free integer.
  Observe that
  \begin{equation*}
    \mathfrak{P}_2^\prime = \{p\ \mathrm{prime}| \Big( \frac{-a}{p} \Big) =1 \}
  \end{equation*}
  where $(a/p)$ denotes the Legendre symbol, see \cite{ribenboim} p.~199.
  Write $a=p_1\dots p_n$ for the prime
  decomposition of $a$. Since the Legendre symbol is multiplicative, if 
  $(\frac{-1}{p})=1$ and 
  $\big(
  \frac{p_i}{p}\big)=1$ for $i=1\dots n$ then $\big( \frac{a}{p}\big)=1$.
  If $p \equiv 1\ \mathrm{mod}\ 4p_i$ then $\big( \frac{p_i}{p}\big)=1$, indeed
  for $p_i$ odd,
  \begin{equation*}
    \big( \frac{p_i}{p} \big)=(-1)^{\frac{p-1}{2}\frac{p_i-1}{2}}
    \big( \frac{p}{p_i} \big)= \big( \frac{p}{p_i} \big)
  \end{equation*}
  by Gauss' Quadratic Reciprocity Law and
  \begin{equation*}
    \big( \frac{p}{p_i} \big)\equiv p^{\frac{p_i-1}{2}}\equiv 1\
    \mathrm{mod}\ p_i
  \end{equation*}
  by Euler's Criterion. Moreover $(\frac{-1}{2})=(\frac{2}{p})=1$ if $p\equiv 1
  \ \mathrm{mod}\ 8$.
  \cite{ribenboim} p.~65 for $p_i=2$.
  It follows that $p\in \mathfrak{P}_1\cap \mathfrak{P}_2^\prime$ 
  if $p\equiv 1\ \mathrm{mod}\ 8ak$ and the analtic density of the set of
  primes satisfying this congruence equals 
  $1/\varphi (8ak)$. The proof of the Lemma is now complete.
\end{proof}

\begin{remark}
  If $\mathcal{L}$ is a $k$-torsion line bundle, $k=4,6$, by Proposition
  \ref{proposition:projciclic} the surface $\mathbb{P}(\mathcal{O}\oplus
  \mathcal{L})$, $E\ne E_i,E_\rho$, has nontrivial self-map of any given prime
  degree $p\ne 2$ if $k=4$ and $p\ne 2,3$ if $k=6$. Moreover, if $k=4$ a
  nontrivial surjective endomorphism $f:X\rightarrow X$ with $\deg(f)=2$,
  is such that $\deg(f_E)=2$. Analogously if $k=6$ and $\deg(f)=2,3$ we must
  have $\deg(f_E)=2,3$ respectively. In general if $k\ge 4$ and
  $f:X\rightarrow
  X$ is a nontrivial self-map of degree a prime $p< k$ then we must have
  $\deg(f_E)=p$.
  \label{remark:kfour}
\end{remark}

We are now in position to conclude the analysis of case (i) Remark
\ref{remark:possiblebundles}. We will make use of the following elementary
Lemma
regarding complex multiplications. In what follows we will always suppose that
the lattice relative to a given elliptic curve is generated by $1,\tau$, where
$\tau$ is a complex number. This does not involve any loss of generality.

\begin{lemma}
  The only possible complex multiplications of degree two are
  $\pm 1 \pm i,\pm \frac{1}{2}\pm \frac{\sqrt{7}}{2}i, \pm \sqrt{2}i$.
  If an elliptic curve has complex multiplication by $\pm 1 \pm i$ then it is
  isomorphic to $E_i$. If an elliptic curve has complex multiplication by
  $\pm \frac{1}{2}+\pm \frac{\sqrt{7}}{2}i$ it is isomorphic to the curve
  relative to the lattice $<1, \frac{1}{2}+ \frac{\sqrt{7}}{2}i >$.
  \label{lemma:complexmultiplication}
\end{lemma}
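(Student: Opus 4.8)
The plan is to reduce the classification of degree‑two complex multiplications to an elementary norm equation over $\mathbb{Z}$, and then to use class number one to pin down the isomorphism class of the curve. By Remark \ref{remark:endoelliptic}, any complex multiplication $\alpha$ is an algebraic integer lying in the imaginary quadratic field $\mathrm{End}_{\mathbb{Q}}(E)$, and its degree equals the norm $\alpha\bar\alpha$. So I would begin from the single requirement $\alpha\bar\alpha=2$.

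First I would rule out that $\alpha$ is real (equivalently rational): a real number preserving a lattice in $\mathbb{C}$ must act as a scalar with integer eigenvalues, hence be a rational integer, and no integer has square $2$. Therefore $\alpha$ is a genuine quadratic algebraic integer, non‑real, with minimal polynomial $x^2-t\,x+2$ over $\mathbb{Z}$, where $t=\alpha+\bar\alpha=\mathrm{Tr}(\alpha)\in\mathbb{Z}$ and the constant term is $\alpha\bar\alpha=2$. For the two roots to be non‑real the discriminant $t^2-8$ must be negative, so $t^2<8$ and $t\in\{0,\pm1,\pm2\}$. Substituting $\alpha=\frac{t\pm\sqrt{t^2-8}}{2}$ for these five values yields exactly $\pm\sqrt{2}\,i$ (from $t=0$), $\pm\frac{1}{2}\pm\frac{\sqrt{7}}{2}i$ (from $t=\pm1$), and $\pm1\pm i$ (from $t=\pm2$), which is precisely the asserted list.

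For the second assertion I would argue through the maximal order. Each of $\pm1\pm i$ already generates the full ring of integers $\mathbb{Z}[i]$ of $\mathbb{Q}(i)$ (indeed $\mathbb{Z}[1\pm i]=\mathbb{Z}[i]$), so if $\alpha\Lambda\subseteq\Lambda$ with $\alpha=1+i$, then $\mathrm{End}(E)\supseteq\mathbb{Z}[i]$; being an order in $\mathbb{Q}(i)$ it is then equal to the maximal order $\mathbb{Z}[i]$, and $\Lambda$ is a rank‑two $\mathbb{Z}[i]$‑submodule of $\mathbb{C}$, that is, a fractional $\mathbb{Z}[i]$‑ideal. Because $\mathbb{Q}(i)$ has class number one, $\mathbb{Z}[i]$ is a principal ideal domain, so $\Lambda$ is homothetic to $\mathbb{Z}[i]$ and $E\simeq\mathbb{C}/\mathbb{Z}[i]=E_i$. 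The identical argument applies to $\alpha=\frac{1+\sqrt{-7}}{2}$: its four associates $\pm\frac{1}{2}\pm\frac{\sqrt{7}}{2}i$ each generate the maximal order $\mathbb{Z}[\frac{1+\sqrt{-7}}{2}]$ of $\mathbb{Q}(\sqrt{-7})$ (use $\bar\alpha=1-\alpha$ and $-\alpha$ to absorb the sign changes), and this order again has class number one, forcing $E\simeq\mathbb{C}/\mathbb{Z}[\frac{1+\sqrt{-7}}{2}]$, the curve with lattice $\langle 1,\frac{1}{2}+\frac{\sqrt{7}}{2}i\rangle$.

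The computations are entirely elementary, and the only point requiring care is the passage from ``$\mathrm{End}(E)$ contains the given element'' to ``the lattice is principal over the maximal order.'' The main (minor) obstacle is therefore to justify cleanly that $\Lambda$ is a fractional ideal of the \emph{maximal} order: one must observe that $\mathbb{Z}[\alpha]$ is itself maximal in both cases, so that no smaller, non‑maximal order can intervene, and then invoke the classical correspondence between homothety classes of lattices admitting multiplication by an order $\mathcal{O}$ and the ideal classes of $\mathcal{O}$, whose triviality for $\mathbb{Q}(i)$ and $\mathbb{Q}(\sqrt{-7})$ is the standard class‑number‑one fact.
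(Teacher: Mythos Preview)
Your proof is correct and follows essentially the same route as the paper: the paper also reduces the first part to a direct computation of algebraic integers of norm two in an imaginary quadratic field (your trace bound $t^2<8$ makes this explicit), and for the second part the paper likewise observes that $\mathbb{Z}[\alpha]$ is the full ring of integers in $\mathbb{Q}(i)$ resp.\ $\mathbb{Q}(\sqrt{-7})$, so that $\Lambda$ is a fractional ideal and class number one forces it to be principal. Your write-up is in fact more careful than the paper's on the point you flag as the ``minor obstacle,'' namely that $\mathbb{Z}[\alpha]$ is already maximal so no non-maximal order can intervene.
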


\begin{proof}
  For the first part of our statement, recall that if $E$ is an elliptic
  curve with complex multiplication then $\mathrm{End}_{\mathbb{Q}}(E)
  =\mathbb{Q}(i\sqrt{a})$, $a>0$ a rational integer, and
  $\mathrm{End}(E)$ is
  contained in the ring of integers of $\mathbb{Q}(i\sqrt{a})$. A direct
  computation shows then that the only complex multiplications
  of degree two are the ones
  listed in our statement.
  If an elliptic curve $E$ has complex multiplication by either $\pm 1 \pm i$
  or $\pm \frac{1}{2}+\pm \frac{\sqrt{7}}{2}i$ then
  $\mathrm{End}_{\mathbb{Q}}(E)$
  coincides respectively with the ring of integers of
  $\mathbb{Q}(i)$, $\mathbb{Q}(i\sqrt{7})$, say $R$.
  If $\Lambda$ denotes the lattice of $E$ then $R\cdot \Lambda\subseteq
  \Lambda$ and $\Lambda$ is fractional ideal of $\mathbb{Q}(i)$,
  $\mathbb{Q}(i\sqrt{7})$ respectively. These two fields have class number
  one \cite[p.~37]{neukirch:ant}, so that, after an isomorphism $\Lambda$ 
  becomes equal to $<1,i>$, $<1,
  \frac{1}{2}+ \frac{\sqrt{7}}{2}i >$ respectively.
\end{proof}

\begin{lemma}
  If $\mathcal{L}$ is a $k$-torsion line bundle on the elliptic curve $E$, with
  $k\ge 4$, then the surface $X= \mathbb{P}( \mathcal{O} \oplus \mathcal{L})$
  admits nontrivial self-maps of any given degree if and
  only if
  \begin{itemize}
    \item $k=4$, $E$ is the elliptic curve relative to the lattice
      $<1,\frac{1}{2}+\frac{i \sqrt{7}}{2}>$, and $\mathcal{L}$  is in the
      kernel of either $\frac{3}{2}+i \frac{\sqrt{7}}{2}$ or $\frac{3}{2}-i
      \frac{\sqrt{7}}{2}$.
    \item $k=5$, $E=E_i$ and $\mathcal{L}$ is in the kernel of either 
      $2+i$ or $2-i$.
    \item $k=7$, $E=E_\rho$ and $\mathcal{L}$  is in the
      kernel of either $\frac{5}{2}+\frac{i \sqrt{3}}{2}$ or
      $\frac{5}{2}-\frac{i \sqrt{3}}{2}$.
  \end{itemize}
  \label{lemma:torsiongreater}
\end{lemma}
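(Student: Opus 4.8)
The plan is to prove both implications by reducing everything to the single prime $2$. The guiding principle is that the full automorphism group of the elliptic curve is an extension of $\mathbf{Aut}(E)$ by translations, and translations act trivially on $\mathrm{Pic}^0(E)$; hence the exponent $m$ in the condition $\varphi^*\mathcal L\simeq\mathcal L^m$ of Proposition \ref{proposition:projciclic} depends only on the linear part of $\varphi$, i.e. on the finite group $\mathbf{Aut}(E)\in\{\mathbb Z_2,\mathbb Z_4,\mathbb Z_6\}$, which acts on $E[k]$ by multiplication by $\overline\zeta$ for a suitable root of unity $\zeta$. Likewise, for a self-map $f$ with $\deg(f_E)=2$ the restriction to fibres is an isomorphism, so by Lemma \ref{lemma:autofibers}(ii) such an $f$ exists exactly when $E$ carries a degree-$2$ endomorphism $\psi$ with $\psi^*\mathcal L\simeq\mathcal L^{\pm1}$; since $\psi^*$ acts on $E[k]$ as multiplication by the conjugate $\overline\psi$, this amounts to $\mathcal L\in\ker(\overline\psi\mp1)$.

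For the "only if" direction I would use that $X$ in particular admits a self-map $f$ of degree $2$, and split according to $\deg(f_E)\in\{1,2\}$. If $\deg(f_E)=1$, Proposition \ref{proposition:projciclic} forces $2$ to lie among the exponents $\pm m$ realised by $\mathbf{Aut}(E)$; as $k\ge4$ excludes $m=\pm1$, a nontrivial $m$ is available only for $E=E_i$, where $\mathbf{Aut}(E)$ contributes an eigenvalue $j$ with $j^2\equiv-1$ (so $2\equiv\pm j$ gives $4\equiv-1\bmod k$, i.e. $k=5$), or for $E=E_\rho$, where it contributes a primitive sixth root $t$ with $t^2-t+1\equiv0$ (so $2\in\{\pm1,\pm t,\pm t^2\}$ forces $k=7$). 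If instead $\deg(f_E)=2$, Lemma \ref{lemma:complexmultiplication} limits $E$ to $E_i$ or to the curves with complex multiplication by $\sqrt7$ or $\sqrt{-2}$, and $\mathcal L\in\ker(\overline\psi\mp1)$ must hold. The heart of the argument is then a short computation of the norms $N(\overline\psi\mp1)$, hence of the orders of these kernels, in each imaginary quadratic order: for $\sqrt{-2}$ both kernels have order $3$, which cannot contain a point of order $k\ge4$, excluding this curve; for $\sqrt7$ the relevant kernel is $\ker\bigl(\tfrac32\mp\tfrac{\sqrt7}{2}i\bigr)$, of order $4$, forcing $k=4$; and for $E_i$ it is $\ker(2\mp i)$, of order $5$, forcing $k=5$. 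In each case the surviving kernel must moreover be cyclic of order exactly $k$, which I would confirm by factoring $(2)$ in the order and computing the valuation of $2$, so that genuine bundles $\mathcal L$ of order $k$ in the stated kernels exist; this isolates precisely the three families of the statement.

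For the "if" direction the point is that in each of the three cases the exponents realised by $\mathbf{Aut}(E)$ already exhaust $(\mathbb Z/k\mathbb Z)^*$. Concretely, for $E_i$ with $k=5$ the eigenvalue $j=2$ gives $\{\pm1,\pm2\}=(\mathbb Z/5\mathbb Z)^*$; for $E_\rho$ with $k=7$ the powers $\{\pm1,\pm t,\pm t^2\}=\{\pm1,\pm3,\pm2\}$ exhaust $(\mathbb Z/7\mathbb Z)^*$; and for the $\sqrt7$-curve with $k=4$ the exponents $\pm1$ already cover every odd prime, the single remaining prime $2$ being supplied by the degree-$2$ complex multiplication through Lemma \ref{lemma:autofibers}(ii). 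Hence Proposition \ref{proposition:projciclic} produces a nontrivial self-map of every prime degree, and by multiplicativity of degrees of every degree.

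I expect the main obstacle to be exactly the arithmetic bookkeeping of the second paragraph: getting the action of $\psi^*$ on $\mathrm{Pic}^0(E)$ right (it is multiplication by the Rosati conjugate $\overline\psi$, not by $\psi$), and then evaluating the kernels $\ker(\overline\psi\mp1)$ and $\ker(\overline\zeta-m)$ carefully enough both to eliminate the $\sqrt{-2}$-curve and to match the surviving kernels with the explicit elements $\tfrac32\pm\tfrac{\sqrt7}{2}i$, $2\pm i$, $\tfrac52\pm\tfrac{\sqrt3}{2}i$ named in the statement. Everything else—the reduction to the prime $2$ and the covering of $(\mathbb Z/k\mathbb Z)^*$ by automorphism exponents—is then routine.
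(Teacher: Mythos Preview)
Your proposal is correct and follows essentially the same route as the paper: both reduce the ``only if'' direction to the prime $2$, split according to whether $\deg(f_E)=1$ or $\deg(f_E)=2$, and then compute the kernels $\ker(\bar\psi\mp1)$ (respectively $\ker(\bar\zeta\mp2)$) in the relevant imaginary quadratic orders, while the ``if'' direction is in both cases the observation that the exponents realised by $\mathbf{Aut}(E)$ (together with the lifted degree-$2$ complex multiplication when $k=4$) exhaust $(\mathbb Z/k\mathbb Z)^{*}$. The only difference is organisational---the paper splits first on the isomorphism type of $E$ (generic, $E_i$, $E_\rho$) rather than on $\deg(f_E)$---and you are more explicit about the action of $\psi^{*}$ via the Rosati conjugate and about the cyclicity of the kernels, points the paper handles tersely or in a footnote.
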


\begin{proof}
  Suppose for the moment that $E\ne E_i,E_\rho$, as already observed,
  see Remark
  \ref{remark:kfour}, $X$ admits a nontrivial self-map of degree two if and
  only if we can lift a nontrivial self-map, say $\alpha$ of degree two
  of $E$.
  Thanks to Lemma \ref{lemma:autofibers} this is possible if and only if
  $\alpha^* \mathcal{L}=\mathcal{L}$ or what is equivalent
  \begin{equation}
    (1-\alpha)^* \mathcal{L}= 0 \ .
    \label{equation:oneminusalpha}
  \end{equation}
  By Lemma \ref{lemma:complexmultiplication}, $\alpha=\pm \frac{1}{2}\pm
  \frac{\sqrt{7}}{2}i, \pm \sqrt{2}i$. In the latter case $(1-\alpha)$ has
  degree three\footnote{Under the identification
  between points of $E$ and line bundles of degree zero modulo linear
  equivalence $\alpha^*$ operates on points of $E$ as $\hat\alpha$ the dual
  endomorphism of $\alpha$, i.e. $\alpha\circ \hat\alpha= \hat\alpha \circ
  \alpha=[\deg(\alpha)]_E$. $\hat \alpha$ is obtained from $\alpha$ by
  complex conjugation.}  
  and \eqref{equation:oneminusalpha} implies
  either $k=3$ or $k=1$ a contradiction. 
  If $\alpha =  \frac{1}{2}\pm \frac{\sqrt{7}}{2}i$ then
  \eqref{equation:oneminusalpha} implies $k=2$ a contradiction. If
  $\alpha = - \frac{1}{2}\pm \frac{\sqrt{7}}{2}i$ then
  \eqref{equation:oneminusalpha} implies $k=4$, $\mathcal{L}$ is in the
  kernel of either $\frac{3}{2}+i \frac{\sqrt{7}}{2}$ or $\frac{3}{2}-i
  \frac{\sqrt{7}}{2}$. In either cases we are able to lift $\alpha$ and $X$
  admits a nontrivial self-map of any given degree, see Remark
  \ref{remark:kfour}.

  Suppose now that $E=E_i$. In order for $X$ to admit a nontrivial self-map of
  degree two either we are able to lift $\pm 1 \pm i$ or $i^* \mathcal{L}=
  \mathcal{L}^{\pm 2}$. Hence $k=5$, $\mathcal{L}$ is in the kernel of either
  $2+i$ or $2-i$ and by Proposition \ref{proposition:projciclic} 
  we have nontrivial 
  self-maps of any given degree.

  Finally let $E=E_\rho$. In this case the only way to obtain a nontrivial
  self-map of degree two is to lift $\psi\in \mathbf{Aut}(E)$ with $\psi^*
  \mathcal{L}= \mathcal{L}^{\pm 2}$. But this last condition implies that $k=7$
  and $\mathcal{L}$ is in the kernel of either
  $\frac{5}{2}+\frac{i \sqrt{3}}{2}$ or $\frac{5}{2}-\frac{i \sqrt{3}}{2}$. It
  follows that there is an element $\lambda \in \mathbf{Aut}(E_\rho)$ such that
  $\lambda^* \mathcal{L}=\mathcal{L}^2$ hence
  and $(\lambda)^*(\lambda)^*
  \mathcal{L}= \mathcal{L}^4$.
  By Proposition \ref{proposition:projciclic} we
  have nontrivial self-maps of any given degree.
\end{proof}

Finally,for reader's convenience, we collect results obtained in this
section in
the following Corollary. It provides the last step
in establishing Theorem \ref{theorem:principal}.

\begin{corollary}
  $X$ is $\mathbb{P}^1$-bundle over an elliptic curve $E$,
  $\mathbb{P}(\mathcal{O}\oplus \mathcal{L})$, where $\mathcal{L}$ is a
  $k$-torsion line bundle on $E$ and either $k=1,2,3$ or
  \begin{itemize}
    \item[(i)] $k=4$, $E$ is the elliptic curve relative to the lattice
      $<1,\frac{1}{2}+\frac{i \sqrt{7}}{2}>$, and $\mathcal{L}$  is in the
      kernel of either $\frac{3}{2}+i \frac{\sqrt{7}}{2}$ or $\frac{3}{2}-i
      \frac{\sqrt{7}}{2}$.
    \item[(ii)] $k=5$, $E=E_i$ and $\mathcal{L}$ is in the kernel of
      either $2+i$ or $2-i$.
    \item[(iii)] $k=7$, $E=E_\rho$ and $\mathcal{L}$  is in the
      kernel of either $\frac{5}{2}+\frac{i \sqrt{3}}{2}$ or
      $\frac{5}{2}-\frac{i \sqrt{3}}{2}$.
  \end{itemize}
\end{corollary}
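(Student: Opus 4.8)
The plan is to assemble the partial results established throughout this section into the single biconditional asserted by the Corollary, namely that a $\mathbb{P}^1$-bundle over an elliptic curve $E$ admits nontrivial self-maps of every degree precisely when it appears in the displayed list. Since every result doing the substantive work is already in place, the argument is essentially one of bookkeeping, organized according to the trichotomy of Remark \ref{remark:possiblebundles}.

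First I would dispose of the two bundles arising from nontrivial extensions. If $\mathcal{E}$ sits in a nontrivial extension of $\mathcal{O}$ by $\mathcal{O}$, then Proposition \ref{proposition:elliptic2nd} shows that no self-map induces an automorphism of $E$ and that infinitely many primes are missing; if $\mathcal{E}$ sits in a nontrivial extension of $\mathcal{O}(q)$ by $\mathcal{O}$, then Proposition \ref{proposition:elliptic3rd} rules out degree two. In either case $X$ fails to admit self-maps of every degree, so only the split case $X=\mathbb{P}(\mathcal{O}\oplus\mathcal{L})$ of Remark \ref{remark:possiblebundles}(i) survives.

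Within the split case I would next reduce to torsion $\mathcal{L}$. If $\deg\mathcal{L}>0$, then by Remark \ref{remark:normals} the surface carries a unique curve of negative self-intersection, whence Lemma \ref{lemma:negativecurve} forces every degree to be a square. If $\deg\mathcal{L}=0$ but $\mathcal{L}$ is not torsion, then Lemma \ref{lemma:autofibers} shows every self-map restricts to an isomorphism on fibres, so its degree is realized by a self-map of $E$, and Corollary \ref{corollary:abelianvarieties} then leaves infinitely many primes unattained. Hence $\mathcal{L}$ must be a $k$-torsion line bundle, and it remains only to pin down the admissible pairs $(E,\mathcal{L})$.

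Finally I would split on the order $k$. For $k=1,2,3$ every prime is either divisible by $k$ or congruent to $\pm1$ modulo $k$, so Remark \ref{remark:elliptic123}, through Proposition \ref{proposition:projciclic}, produces self-maps of every degree for an arbitrary base $E$; this accounts for the $k=1,2,3$ alternative. For $k\ge 4$ the complete classification is precisely the content of Lemma \ref{lemma:torsiongreater}, whose three cases ($k=4$ with $E$ of lattice $\langle 1,\tfrac12+\tfrac{i\sqrt7}{2}\rangle$, $k=5$ with $E=E_i$, and $k=7$ with $E=E_\rho$) reproduce items (i)--(iii) verbatim. Since all the genuine analysis has already been carried out, I do not expect a new obstacle to surface in the Corollary itself; the single point demanding care is the identification of the distinguished elliptic curves with the lattices named in the statement, which is supplied by Lemma \ref{lemma:complexmultiplication}.
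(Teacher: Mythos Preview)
Your proposal is correct and follows exactly the same route as the paper's own proof, which simply cites Remark \ref{remark:elliptic123}, Propositions \ref{proposition:elliptic2nd} and \ref{proposition:elliptic3rd}, and Lemma \ref{lemma:torsiongreater}; you have merely unpacked the bookkeeping in more detail, including the reduction to torsion $\mathcal{L}$ (via Lemma \ref{lemma:negativecurve}, Lemma \ref{lemma:autofibers}, and Corollary \ref{corollary:abelianvarieties}) that the paper folds into Remark \ref{remark:elliptic123} and the surrounding discussion.
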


\begin{proof}
  This is an immediate consequence of Remark \ref{remark:elliptic123},
  Proposition \ref{proposition:elliptic2nd}, Proposition
  \ref{proposition:elliptic3rd} and Lemma \ref{lemma:torsiongreater}.
\end{proof}

\bibliography{mybiblio}
\bibliographystyle{plain}
\end{document}